\definecolor{dark-gray}{gray}{0.3}
\definecolor{dkgray}{rgb}{.4,.4,.4}
\definecolor{dkblue}{rgb}{0,0,.5}
\definecolor{medblue}{rgb}{0,0,.75}
\definecolor{rust}{rgb}{0.5,0.1,0.1}
\newtheorem{bigthm}{Theorem}
\newtheorem{theorem}{Theorem}[section]
\newtheorem{fact}[theorem]{Fact}
\theoremstyle{definition}
\newtheorem{remark}[theorem]{Remark}
\newcommand{\term}{\emph}
\numberwithin{equation}{section} 
\numberwithin{figure}{section}
\numberwithin{table}{section}
\numberwithin{recipe}{section}
\providecommand{\mathbold}[1]{\bm{#1}}
\renewcommand{\phi}{\varphi}
\newcommand{\eps}{\varepsilon}
\newcommand{\econst}{\mathrm{e}}
\newcommand{\Id}{\mathbf{I}}
\newcommand{\coll}[1]{\mathscr{#1}}
\providecommand{\mathbbm}{\mathbb} 
\newcommand{\R}{\mathbbm{R}}
\newcommand{\C}{\mathbbm{C}}
\newcommand{\Sym}{\mathbb{H}}
\newcommand{\abs}[1]{\left\vert {#1} \right\vert}
\newcommand{\abssq}[1]{{\abs{#1}}^2}
\newcommand{\diff}[1]{\mathrm{d}{#1}}
\newcommand{\idiff}[1]{\, \diff{#1}}
\newcommand{\Prob}[1]{\mathbbm{P}\left\{{#1}\right\}}
\newcommand{\Expect}{\operatorname{\mathbb{E}}}
\newcommand{\normal}{\textsc{normal}}
\newcommand{\vct}[1]{\mathbold{#1}}
\newcommand{\mtx}[1]{\mathbold{#1}}
\newcommand{\adj}{*}
\newcommand{\trace}{\operatorname{tr}}
\newcommand{\psdle}{\preccurlyeq}
\newcommand{\norm}[1]{\left\Vert {#1} \right\Vert}
\newcommand{\normsq}[1]{\norm{#1}^2}
\newcommand{\pnorm}[2]{\norm{#2}_{#1}}
\title[The Norm of a Sum of Independent Random Matrices]{The Expected Norm of a Sum of Independent Random Matrices: \\
An Elementary Approach}
\author[J.~A.~Tropp]{Joel~A.~Tropp}
\date{15 June 2015.}
\subjclass[2010]{Primary: 60B20. Secondary: 60F10, 60G50, 60G42.}
\keywords{Probability inequality; random matrix; sum of independent random variables.}
\begin{document}

\begin{abstract}
In contemporary applied and computational mathematics, a frequent
challenge is to bound the expectation of the spectral norm of a sum
of independent random matrices.  This quantity is controlled by the
norm of the expected square of the random matrix and the expectation
of the maximum squared norm achieved by one of the summands;
there is also a weak dependence on the dimension of the random matrix.
The purpose of this paper is to give a complete, elementary proof
of this important, but underappreciated, inequality.
\end{abstract}

\maketitle

\section{Motivation}

Over the last decade, random matrices have become ubiquitous in applied and computational mathematics.
As this trend accelerates, more and more researchers must confront random matrices as part of their work.
Classical random matrix theory can be difficult to use, and it is often silent about the questions that come
up in modern applications.  As a consequence, it has become imperative to develop and disseminate
new tools that are easy to use and that apply to a wide range of random matrices.

\subsection{Matrix Concentration Inequalities}

Matrix concentration inequalities are among the most popular of these new methods.
For a random matrix $\mtx{Z}$ with appropriate structure, these results use simple
parameters associated with the random matrix to provide bounds of the form
$$
\Expect \norm{ \mtx{Z} - \Expect \mtx{Z} } \quad \leq \quad \dots
\quad\text{and}\quad
\Prob{ \norm{ \mtx{Z} - \Expect \mtx{Z} } \geq t }
	\quad\leq\quad \dots
$$
where $\norm{\cdot}$ denotes the spectral norm, also known as the $\ell_2$ operator norm.
These tools have already found a place in a huge number of mathematical research fields, including
\begin{multicols}{2}
\begin{itemize}
\item	numerical linear algebra~\cite{Tro11:Improved-Analysis}
\item	numerical analysis~\cite{MB14:Far-Field-Compression}
\item	uncertainty quantification~\cite{CG14:Computing-Active}
\item	statistics~\cite{Kol11:Oracle-Inequalities}
\item	econometrics~\cite{CC13:Optimal-Uniform}
\item	approximation theory~\cite{CDL13:Stability-Accuracy}
 \item	sampling theory~\cite{BG13:Relevant-Sampling}
\item	machine learning~\cite{DKC13:High-Dimensional-Gaussian,LSS+14:Randomized-Nonlinear}
\item	learning theory~\cite{FSV12:Learning-Functions,MKR12:PAC-Bayesian}
\item	mathematical signal processing~\cite{CBSW14:Coherent-Matrix}
\item	optimization~\cite{CSW12:Linear-Matrix}
\item	computer graphics and vision~\cite{CGH14:Near-Optimal-Joint}
\item	quantum information theory~\cite{Hol12:Quantum-Systems}
\item	theory of algorithms~\cite{HO14:Pipage-Rounding,CKMP14:Solving-SDD} and
\item	combinatorics~\cite{Oli10:Spectrum-Random}.
\end{itemize}
\end{multicols}
\noindent
These references are chosen more or less at random from a long menu of possibilities.
See the monograph~\cite{Tro15:Introduction-Matrix}
for an overview of the main results on matrix concentration,
many detailed applications, and additional background references.

\subsection{The Expected Norm}

The purpose of this paper is to provide a complete proof of the following
important, but underappreciated, theorem.
This result is adapted from~\cite[Thm.~A.1]{CGT12:Masked-Sample}.

\begin{bigthm}[The Expected Norm of an Independent Sum of Random Matrices] \label{thm:main}
Consider an independent family $\{ \mtx{S}_1, \dots, \mtx{S}_n \}$ of random $d_1 \times d_2$
complex-valued matrices with $\Expect \mtx{S}_i = \mtx{0}$ for each index $i$, and define 
\begin{equation} \label{eqn:indep-sum}
\mtx{Z} := \sum_{i=1}^n \mtx{S}_i.
\end{equation}
Introduce the matrix variance parameter
\begin{equation} \label{eqn:variance-param}
\begin{aligned}
v(\mtx{Z}) :=& \max\left\{ \norm{ \Expect\big[ \mtx{ZZ}^\adj \big] }, \
	\norm{ \Expect\big[ \mtx{Z}^\adj \mtx{Z} \big] } \right\} \\
	=& \max\left\{ \norm{ \sum_i \Expect\big[ \mtx{S}_i \mtx{S}_i^\adj \big] }, \
	\norm{ \sum_i \Expect\big[ \mtx{S}_i^\adj \mtx{S}_i \big] } \right\}
\end{aligned}
\end{equation}
and the large deviation parameter
\begin{equation} \label{eqn:large-dev-param}
L := \left( \Expect \max\nolimits_i \normsq{\mtx{S}_i} \right)^{1/2}.
\end{equation}
Define the dimensional constant
\begin{equation} \label{eqn:dimensional}
C(\vct{d}) := C(d_1, d_2) := 4 \cdot \big(1 + 2\lceil \log (d_1 + d_2) \rceil \big).
\end{equation}
Then we have the matching estimates
\begin{equation} \label{eqn:main-ineqs}
\sqrt{c \cdot v(\mtx{Z})} \ +\ c \cdot L 
	\quad\leq\quad \left( \Expect \normsq{\mtx{Z}} \right)^{1/2}
	\quad\leq\quad \sqrt{C(\vct{d}) \cdot v(\mtx{Z})}\ +\ C(\vct{d}) \cdot L.
\end{equation}
In the lower inequality, we can take $c := 1/4$.
The symbol $\norm{\cdot}$ denotes the $\ell_2$ operator norm, also known as the spectral norm,
and ${}^*$ refers to the conjugate transpose operation.  The map $\lceil \cdot \rceil$ returns
the smallest integer that exceeds its argument.
\end{bigthm}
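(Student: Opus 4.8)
The plan is to establish the lower bound directly and sharply, and to obtain the upper bound by reducing to a self-adjoint sum and running a polynomial moment argument.

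\emph{Lower bound.} I would first observe that $\mtx M\mapsto\norm{\mtx M}$ is convex and $\norm{\mtx Z}^2=\norm{\mtx Z\mtx Z^\adj}=\norm{\mtx Z^\adj\mtx Z}$, so Jensen gives $\Expect\norm{\mtx Z}^2\geq\max\{\norm{\Expect\mtx Z\mtx Z^\adj},\norm{\Expect\mtx Z^\adj\mtx Z}\}=v(\mtx Z)$. For the deviation term, introduce independent Rademacher signs $\eps_1,\dots,\eps_n$ independent of the summands. Conditioning on $(\mtx S_1,\dots,\mtx S_n)$, the identity $\mtx S_i=\Expect_\eps[\eps_i\sum_j\eps_j\mtx S_j]$ and convexity give $\norm{\mtx S_i}\leq\Expect_\eps\norm{\sum_j\eps_j\mtx S_j}$ for every $i$; since the right side is free of $i$, $\max_i\norm{\mtx S_i}\leq\Expect_\eps\norm{\sum_j\eps_j\mtx S_j}$, and squaring plus Jensen yields $\Expect\max_i\norm{\mtx S_i}^2\leq\Expect\norm{\sum_j\eps_j\mtx S_j}^2$. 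A standard symmetrization—pass to $\mtx S_j-\mtx S_j'$ with $\mtx S_j'$ an independent copy, note that $\{\eps_j(\mtx S_j-\mtx S_j')\}$ and $\{\mtx S_j-\mtx S_j'\}$ are equidistributed, and use $(a+b)^2\leq2a^2+2b^2$—bounds the last quantity by $4\Expect\norm{\mtx Z}^2$. Hence $L\leq2(\Expect\norm{\mtx Z}^2)^{1/2}$, and writing $E:=\Expect\norm{\mtx Z}^2$ this gives $\sqrt{\tfrac14 v(\mtx Z)}+\tfrac14 L\leq\tfrac12\sqrt E+\tfrac12\sqrt E=\sqrt E$, the claimed lower bound with $c=\tfrac14$.

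\emph{Upper bound: reduction and the moment method.} I would replace each $\mtx S_i$ by its Hermitian dilation, the self-adjoint $(d_1+d_2)\times(d_1+d_2)$ matrix carrying $\mtx S_i,\mtx S_i^\adj$ in its off-diagonal blocks. Dilation is linear and isometric for the spectral norm; a short computation shows it leaves $v$ and $\Expect\max_i\norm{\cdot}^2$ unchanged, and $C(\vct d)$ is exactly the dimensional constant of a self-adjoint sum in dimension $d:=d_1+d_2$. So it suffices to bound $\Expect\norm{\mtx Y}^2$ for $\mtx Y=\sum_i\mtx X_i$ with the $\mtx X_i$ independent, self-adjoint, mean zero, in terms of $v=\norm{\sum_i\Expect\mtx X_i^2}$ and $L=(\Expect\max_i\norm{\mtx X_i}^2)^{1/2}$. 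Since $\norm{\mtx Y}^{2p}\leq\trace(\mtx Y^{2p})$, Jensen gives $(\Expect\norm{\mtx Y}^2)^{1/2}\leq(\Expect\trace(\mtx Y^{2p}))^{1/(2p)}$ for every integer $p\geq1$, and I would prove a polynomial moment inequality: a bound for $\Expect\trace(\mtx Y^{2p})$ of the shape $d\cdot\bigl[\,O(\sqrt{p\,v})+O(p\,L)\,\bigr]^{2p}$. The mechanism is the usual one—expand $\trace(\mtx Y^{2p})=\sum\trace(\mtx X_{i_1}\cdots\mtx X_{i_{2p}})$; every word containing an index value exactly once has zero expectation (that factor is independent of the rest and centered), so only words with at most $p$ distinct values survive, and those are estimated by a recursion in $p$ that removes a matched pair of factors at a time, each removal producing either a factor $v$ (when the pair is averaged) or a factor governed by $\max_i\norm{\mtx X_i}^2$. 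The unavoidable factor $d$ (the trace is a sum of $d$ eigenvalue powers) contributes only $d^{1/(2p)}$ after the root; choosing $p=\lceil\log(d_1+d_2)\rceil$ makes that an absolute constant—this is the source of the logarithm—and collecting constants yields the upper estimate with $C(\vct d)=4\bigl(1+2\lceil\log(d_1+d_2)\rceil\bigr)$.

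\emph{Where the difficulty lies.} The hard part will be to run the recursion so that the deviation parameter enters as the \emph{first} moment $L=(\Expect\max_i\norm{\mtx X_i}^2)^{1/2}$ rather than as a higher moment $(\Expect\max_i\norm{\mtx X_i}^{2p})^{1/(2p)}$, whose order would grow with $p\asymp\log d$ and wreck the bound. In particular one must not split the mixed term $\Expect[\max_i\norm{\mtx X_i}^2\cdot\trace(\mtx Y^{2p-2})]$ by H\"older; the maximum has to be carried through the recursion and released only at the end (or the contribution of the atypically large summands isolated and controlled directly, which is itself delicate since crude bounds on that part fail). Driving this bookkeeping through with clean absolute constants—so that the dimensional factor comes out exactly $4(1+2\lceil\log(d_1+d_2)\rceil)$—is the crux of the argument.
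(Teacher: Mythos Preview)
Your lower bound is correct and essentially matches the paper's: both use Jensen for the variance term and the identity $\mtx S_i=\Expect_{\vct\eps}[\eps_i\sum_j\eps_j\mtx S_j]$ (equivalently, conditioning on $\eps_I$ for the maximizing index) together with symmetrization for the large-deviation term.

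Your upper bound takes a genuinely different route, and the difficulty you flag is real and unresolved in your plan. You propose a direct trace-moment recursion on $\Expect\trace(\mtx Y^{2p})$ for the random sum $\mtx Y=\sum_i\mtx X_i$, peeling off matched pairs. The paper does \emph{not} do this. Instead it (i) first proves the Khintchine bound $\big(\Expect_{\vct\eps}\norm{\sum_i\eps_i\mtx H_i}^2\big)^{1/2}\le\sqrt{1+2\lceil\log d\rceil}\,\norm{\sum_i\mtx H_i^2}^{1/2}$ for \emph{fixed} $\mtx H_i$ via a trace-moment recursion in the Rademacher signs only; (ii) symmetrizes the general sum and applies this conditionally, yielding $\big(\Expect\norm{\mtx X}^2\big)^{1/2}\le\sqrt{C(d)}\,\big(\Expect\norm{\sum_i\mtx Y_i^2}\big)^{1/2}$; and (iii) bounds $\Expect\norm{\sum_i\mtx Y_i^2}$ by a separate positive-semidefinite estimate. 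Step (iii) is the key device that sidesteps exactly your obstacle: one symmetrizes $\sum_i\mtx T_i$ with $\mtx T_i=\mtx Y_i^2$, applies Khintchine again, uses $\norm{\sum_i\mtx T_i^2}\le\max_i\norm{\mtx T_i}\cdot\norm{\sum_i\mtx T_i}$, and then Cauchy--Schwarz produces a \emph{quadratic self-bounding inequality} $E\le\norm{\Expect\mtx W}+\sqrt{C(d)}\,(\Expect\max_i\norm{\mtx T_i})^{1/2}\sqrt E$ in $E=\Expect\norm{\sum_i\mtx T_i}$. Solving this quadratic is precisely what makes only the \emph{first} moment $\Expect\max_i\norm{\mtx Y_i}^2$ appear, with no higher powers of the maximum. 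Your direct word-expansion recursion, by contrast, naturally generates terms like $\Expect[\max_i\norm{\mtx X_i}^2\cdot\trace(\mtx Y^{2p-2})]$ at each step, and you correctly note that H\"older here would inflate $L$ to $(\Expect\max_i\norm{\mtx X_i}^{2p})^{1/(2p)}$; you do not say how to avoid this, and it is not clear your scheme can recover the stated constant $C(\vct d)$ without the PSD self-bounding trick or an equivalent device.
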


\noindent
The proof of this result occupies the bulk of this paper.
Most of the page count is attributed to a detailed
presentation of the required background material from linear
algebra and probability.  We have based the argument on the
most elementary considerations possible, and we have tried
to make the work self-contained.
Once the reader has digested these ideas, the related---but more sophisticated
---approach in the paper~\cite{MJCFT14:Matrix-Concentration} should be accessible.

\subsection{Discussion}

Before we continue, some remarks about Theorem~\ref{thm:main} are in order.
First, although it may seem restrictive to focus on
independent sums, as in~\eqref{eqn:indep-sum},
this model captures an enormous number of
useful examples.  See the monograph~\cite{Tro15:Introduction-Matrix}
for justification.

We have chosen the term \emph{variance parameter} because the quantity~\eqref{eqn:variance-param}
is a direct generalization of the variance of a scalar random variable.
The passage from the first formula to the second formula in~\eqref{eqn:variance-param}
is an immediate consequence of the assumption that the summands $\mtx{S}_i$ are independent
and have zero mean (see Section~\ref{sec:upper}).
We use the term \emph{large-deviation parameter} because the
quantity~\eqref{eqn:large-dev-param} reflects the part of the expected
norm of the random matrix that is attributable to
one of the summands taking an unusually large value.  In practice,
both parameters are easy to compute using matrix arithmetic and
some basic probabilistic considerations.

In applications, it is common that we need high-probability bounds on
the norm of a random matrix.  Typically, the bigger challenge is to
estimate the expectation of the norm, which is what Theorem~\ref{thm:main}
achieves.  Once we have a bound for the expectation, we can use scalar
concentration inequalities, such as~\cite[Thm.~6.10]{BLM13:Concentration-Inequalities},
to obtain high-probability bounds on the deviation between the norm and
its mean value.

We have stated Theorem~\ref{thm:main} as a bound on the second moment of $\norm{\mtx{Z}}$
because this is the most natural form of the result.  Equivalent bounds hold for the first
moment:
$$
\sqrt{c' \cdot v(\mtx{Z})} \ +\  c' \cdot L
	\quad\leq\quad \Expect \norm{ \mtx{Z} }
	\quad\leq\quad \sqrt{C(\vct{d}) \cdot v(\mtx{Z})}\ +\ C(\vct{d}) \cdot L.
$$
We can take $c' = 1/8$.
The upper bound follows easily from~\eqref{eqn:main-ineqs} and Jensen's inequality.
The lower bound requires the Khintchine--Kahane inequality~\cite{LO94:Best-Constant}.

Observe that the lower and upper estimates in~\eqref{eqn:main-ineqs}
differ only by the factor $C(\vct{d})$.  As a consequence,
the lower bound has no explicit dimensional dependence,
while the upper bound has only a weak dependence on the dimension.
Under the assumptions of the theorem, it is not possible to make
substantial improvements to either the lower bound or the
upper bound.  Section~\ref{sec:examples} provides examples
that support this claim.

In the theory of matrix concentration, one of the major challenges
is to understand what properties of the random matrix $\mtx{Z}$
allow us to remove the dimensional factor $C(\vct{d})$ from the
estimate~\eqref{eqn:main-ineqs}.  This question is largely open,
but the recent papers~\cite{Oli13:Lower-Tail,BV14:Sharp-Nonasymptotic,Tro15:Second-Order-Matrix}
make some progress.

\subsection{The Uncentered Case}

Although Theorem~\ref{thm:main} concerns a centered random matrix,
it can also be used to study a general random matrix.  The following
result is an immediate corollary of Theorem~\ref{thm:main}.

\begin{bigthm} \label{thm:uncentered}
Consider an independent family $\{ \mtx{S}_1, \dots, \mtx{S}_n \}$ of random $d_1 \times d_2$
complex-valued matrices, not necessarily centered.  Define
$$
\mtx{R} := \sum_{i=1}^n \mtx{S}_i
$$
Introduce the variance parameter
$$
\begin{aligned}
v(\mtx{R}) :=& \max\left\{ \norm{ \Expect \big[ (\mtx{R} - \Expect \mtx{R})(\mtx{R} - \Expect \mtx{R})^\adj \big] }, \
	\norm{ \Expect \big[ (\mtx{R} - \Expect \mtx{R})^\adj (\mtx{R} - \Expect \mtx{R}) \big] } \right\} \\
	=& \max\left\{ \norm{ \sum_{i=1}^n \Expect \big[ (\mtx{S}_i - \Expect \mtx{S}_i)(\mtx{S}_i - \Expect \mtx{S}_i)^\adj \big] }, \
	\norm{ \Expect \big[ (\mtx{S}_i - \Expect \mtx{S}_i)^\adj (\mtx{S}_i - \Expect \mtx{S}_i) \big] } \right\}
\end{aligned}
$$
and the large-deviation parameter
$$
L^2 := \Expect \max\nolimits_i \normsq{ \mtx{S}_i - \Expect \mtx{S}_i }.
$$
Then we have the matching estimates
$$
\sqrt{c \cdot v(\mtx{R})}\ +\ c \cdot L
	\quad\leq\quad \left( \Expect \normsq{\mtx{R} - \Expect \mtx{R}} \right)^{1/2}
	\quad\leq\quad \sqrt{C(\vct{d}) \cdot v(\mtx{R})} \ + \ C(\vct{d}) \cdot L.
$$
We can take $c = 1/4$, and the dimensional constant $C(\vct{d})$ is defined in~\eqref{eqn:dimensional}.
\end{bigthm}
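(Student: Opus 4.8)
The plan is to obtain Theorem~\ref{thm:uncentered} as a direct corollary of Theorem~\ref{thm:main} by passing to centered summands. Introduce the random matrices $\mtx{S}_i' := \mtx{S}_i - \Expect \mtx{S}_i$ for each index $i$. Each $\mtx{S}_i'$ is a deterministic affine function of $\mtx{S}_i$ alone, so the family $\{\mtx{S}_1', \dots, \mtx{S}_n'\}$ inherits independence from $\{\mtx{S}_1, \dots, \mtx{S}_n\}$, and $\Expect \mtx{S}_i' = \mtx{0}$ by construction. By linearity of expectation,
\begin{equation}
\mtx{Z} := \sum_{i=1}^n \mtx{S}_i' = \mtx{R} - \Expect \mtx{R},
\end{equation}
so the family $\{\mtx{S}_i'\}$ satisfies every hypothesis of Theorem~\ref{thm:main}.

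Next I would verify that the parameters in Theorem~\ref{thm:main}, formed from $\{\mtx{S}_i'\}$ and $\mtx{Z}$, agree with the parameters declared in Theorem~\ref{thm:uncentered}. Substituting $\mtx{Z} = \mtx{R} - \Expect \mtx{R}$ into the variance parameter~\eqref{eqn:variance-param} produces exactly the quantity $v(\mtx{R})$; the equivalent summand-wise expression is the manipulation recorded in Section~\ref{sec:upper}, valid because the $\mtx{S}_i'$ are independent and centered. Likewise, the large-deviation parameter~\eqref{eqn:large-dev-param} for $\{\mtx{S}_i'\}$ is $\Expect \max\nolimits_i \normsq{\mtx{S}_i - \Expect \mtx{S}_i} = L^2$. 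Finally, the dimensional constant~\eqref{eqn:dimensional} depends only on $d_1$ and $d_2$, which centering leaves untouched.

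Feeding these identifications into the matching estimates~\eqref{eqn:main-ineqs} of Theorem~\ref{thm:main} yields the asserted bounds on $(\Expect \normsq{\mtx{R} - \Expect \mtx{R}})^{1/2}$ with $c = 1/4$, which completes the argument. There is no genuine obstacle here: the only points that require a moment's attention are the observation that an affine shift of each member preserves independence of the family, and the routine bookkeeping that transports the definitions of $v$ and $L$ from the centered setting of Theorem~\ref{thm:main} to the uncentered statement.
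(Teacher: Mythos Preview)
Your proposal is correct and matches the paper's approach exactly: the paper states that Theorem~\ref{thm:uncentered} is an immediate corollary of Theorem~\ref{thm:main}, and your argument---applying Theorem~\ref{thm:main} to the centered summands $\mtx{S}_i' = \mtx{S}_i - \Expect \mtx{S}_i$ and checking that the parameters transport verbatim---is precisely the intended derivation.
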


Theorem~\ref{thm:uncentered} can also be used to study $\norm{\mtx{R}}$
by combining it with the estimates
$$
\norm{ \Expect \mtx{R} } \ -\ \left( \Expect \normsq{ \mtx{R} - \Expect \mtx{R} } \right)^{1/2}
\quad\leq\quad \left( \Expect \normsq{\mtx{R}} \right)^{1/2}
\quad\leq\quad \norm{ \Expect \mtx{R} } \ + \ \left( \Expect \normsq{ \mtx{R} - \Expect \mtx{R} } \right)^{1/2}.
$$
These bounds follow from the triangle inequality for the spectral norm.

It is productive to interpret Theorem~\ref{thm:uncentered} as a perturbation result
because it describes how far the random matrix $\mtx{R}$ deviates from its mean $\Expect \mtx{R}$.
We can derive many useful consequences from a bound of the form
$$
\left( \Expect \normsq{ \mtx{R} - \Expect \mtx{R} } \right)^{1/2}
	\quad\leq\quad \dots
$$
This estimate shows that, on average, all of the singular values of $\mtx{R}$
are close to the corresponding singular values of $\Expect \mtx{R}$.
It also implies that, on average, the singular vectors of $\mtx{R}$ are close to the
corresponding singular vectors of $\Expect \mtx{R}$, provided that the
associated singular values are isolated.  Furthermore, we discover that, on average, each
linear functional $\trace[ \mtx{CR} ]$ is uniformly close to $\Expect \trace[ \mtx{CR} ]$
for each fixed matrix $\mtx{C} \in \mathbb{M}^{d_2 \times d_1}$ with bounded
Schatten $1$-norm $\pnorm{S_1}{\mtx{C}} \leq 1$.

\subsection{History}

Theorem~\ref{thm:main} is not new.  A somewhat weaker version of the upper bound
appeared in Rudelson's work~\cite[Thm.~1]{Rud99:Random-Vectors};
see also~\cite[Thm.~3.1]{RV07:Sampling-Large} and~\cite[Sec.~9]{Tro08:Conditioning-Random}.
The first explicit statement of the upper bound appeared
in~\cite[Thm.~A.1]{CGT12:Masked-Sample}.  All of these results
depend on the noncommutative Khintchine
inequality~\cite{LP86:Inegalites-Khintchine,Pis98:Noncommutative-Vector,Buc01:Operator-Khintchine}.
In our approach, the main innovation is a particularly easy proof of a
Khintchine-type inequality for matrices,
patterned after~\cite[Cor~7.3]{MJCFT14:Matrix-Concentration}
and~\cite[Thm.~8.1]{Tro15:Second-Order-Matrix}.

The ideas behind the proof of the lower bound in Theorem~\ref{thm:main} are older.
This estimate depends on generic considerations about the behavior
of a sum of independent random variables in a Banach space.
These techniques are explained in detail in~\cite[Ch.~6]{LT91:Probability-Banach}.
Our presentation expands on a proof sketch that appears in
the monograph~\cite[Secs.~5.1.2 and~6.1.2]{Tro15:Introduction-Matrix}.

\subsection{Target Audience}

This paper is intended for students and researchers who want to
develop a detailed understanding of the foundations of matrix concentration.
The preparation required is modest.

\begin{itemize}

\item	\textbf{Basic Convexity.}  Some simple ideas from convexity play a role,
notably the concept of a convex function and Jensen's inequality.

\item	\textbf{Intermediate Linear Algebra.}  The requirements from linear algebra are more substantial.  The reader should be familiar with the spectral theorem for Hermitian (or symmetric) matrices, Rayleigh's variational principle, the trace of a matrix, and the spectral norm.  The paper includes reminders about this material.  The paper elaborates on some less familiar ideas, including inequalities for the trace and the spectral norm.

\item	\textbf{Intermediate Probability.}  The paper demands some comfort with probability.  The most important concepts are expectation and the elementary theory of conditional expectation.  We develop the other key ideas, including the notion of symmetrization.
\end{itemize}

\noindent
Although many readers will find the background material unnecessary,
it is hard to locate these ideas in one place and we prefer to make the
paper self-contained.  In any case, we provide detailed cross-references
so that the reader may dive into the proofs of the main results
without wading through the shallower part of the paper.

\subsection{Roadmap}

Section~\ref{sec:linear-algebra} and Section~\ref{sec:probability} contain
the background material from linear algebra and probability.
To prove the upper bound in Theorem~\ref{thm:main}, the
key step is to establish the result for the special case
of a sum of fixed matrices, each modulated by a random sign.
This result appears in Section~\ref{sec:khintchine}.
In Section~\ref{sec:upper}, we exploit this result
to obtain the upper bound in~\eqref{eqn:main-ineqs}.
In Section~\ref{sec:lower}, we present the easier proof
of the lower bound in~\eqref{eqn:main-ineqs}.
Finally, Section~\ref{sec:examples} shows that
it is not possible to improve~\eqref{eqn:main-ineqs}
substantially.

\section{Linear Algebra Background} \label{sec:linear-algebra}

Our aim is to make this paper as accessible as possible.
To that end, this section presents some background material from linear algebra.
Good references include~\cite{Hal74:Finite-Dimensional-Vector,Bha97:Matrix-Analysis,HJ13:Matrix-Analysis}.
We also assume some familiarity with basic ideas from the theory of convexity,
which may be found in the
books~\cite{Lue69:Optimization-Vector,Roc70:Convex-Analysis,Bar02:Course-Convexity,BV04:Convex-Optimization}.

\subsection{Convexity}

Let $V$ be a finite-dimensional linear space.  A subset $E \subset V$ is \term{convex} when
$$
\vct{x}, \vct{y} \in E
\quad\text{implies}\quad
\tau \cdot \vct{x} + (1- \tau) \cdot \vct{y} \in E
\quad\text{for each $\tau \in [0, 1]$.}
$$
Let $E$ be a convex subset of a linear space $V$.  A function $f : E \to \R$
is \term{convex} if
\begin{equation} \label{eqn:convexity}
f\big( \tau \vct{x} + (1-\tau) \vct{y} \big) \leq \tau \cdot f(\vct{x}) + (1-\tau) \cdot f(\vct{y})
\quad\text{for all $\tau \in [0,1]$ and all $\vct{x}, \vct{y} \in V$.}
\end{equation}
We say that $f$ is \term{concave} when $-f$ is convex.

\subsection{Vector Basics}

Let $\C^d$ be the complex linear space of $d$-dimensional complex vectors,
equipped with the usual componentwise addition and scalar multiplication.
The $\ell_2$ norm $\norm{\cdot}$ is defined on $\C^d$ via the expression
\begin{equation*} \label{eqn:l2-norm}
\normsq{ \vct{x} } := \vct{x}^\adj \vct{x}
\quad\text{for each $\vct{x} \in \C^d$.}
\end{equation*}
The symbol ${}^\adj$ denotes the conjugate transpose of a vector.
Recall that the $\ell_2$ norm is a convex function.

A family $\{ \vct{u}_1, \dots, \vct{u}_d \} \subset \C^d$
is called an \term{orthonormal basis} if it satisfies the relations
$$
\vct{u}_i^\adj \vct{u}_j = \begin{cases} 1, & i = j \\ 0, & i \neq j. \end{cases}
$$
The orthonormal basis also has the property
$$
\sum_{i=1}^d \vct{u}_i \vct{u}_i^\adj = \Id_d
$$
where $\Id_d$ is the $d \times d$ identity matrix.

\subsection{Matrix Basics}

A \term{matrix} is a rectangular array of complex numbers.
Addition and multiplication by a complex scalar are defined componentwise,
and we can multiply two matrices with compatible dimensions. 
We write $\mathbb{M}^{d_1 \times d_2}$
for the complex linear space of $d_1 \times d_2$ matrices.
The symbol ${}^\adj$ also refers to the conjugate transpose operation on matrices.

A square matrix $\mtx{H}$ is \term{Hermitian} when $\mtx{H} = \mtx{H}^\adj$.
Hermitian matrices are sometimes called \term{conjugate symmetric}.
We introduce the set of $d \times d$ Hermitian matrices:
$$
\Sym_d := \big\{ \mtx{H} \in \mathbb{M}^{d \times d} : \mtx{H} = \mtx{H}^{\adj} \big\}.
$$
Note that the set $\Sym_d$ is a linear space over the real field.

An Hermitian matrix $\mtx{A} \in \Sym_d$ is \term{positive semidefinite} when
\begin{equation*} \label{eqn:psd}
\vct{u}^\adj \mtx{A} \vct{u} \geq 0
\quad\text{for each $\vct{u} \in \C^d$.}
\end{equation*}
It is convenient to use the notation $\mtx{A} \psdle \mtx{H}$ to mean that $\mtx{H} - \mtx{A}$
is positive semidefinite.  In particular, the relation $\mtx{0} \psdle \mtx{H}$ is equivalent
to $\mtx{H}$ being positive semidefinite.  Observe that
$$
\mtx{0} \psdle \mtx{A}
\quad\text{and}\quad
\mtx{0} \psdle \mtx{H}
\quad\text{implies}\quad
\mtx{0} \psdle \alpha \cdot (\mtx{A} + \mtx{H})
\quad\text{for each $\alpha \geq 0$.}
$$
In other words, addition and nonnegative scaling preserve the positive-semidefinite property.

For every matrix $\mtx{B}$, both of its squares $\mtx{BB}^\adj$ and $\mtx{B}^\adj \mtx{B}$
are Hermitian and positive semidefinite.

\subsection{Basic Spectral Theory}

Each Hermitian matrix $\mtx{H} \in \Sym_d$ can be expressed in the form
\begin{equation} \label{eqn:eig-decomp}
\mtx{H} = \sum_{i=1}^d \lambda_i \vct{u}_i \vct{u}_i^\adj
\end{equation}
where the $\lambda_i$ are uniquely determined real numbers, called \term{eigenvalues},
and $\{ \vct{u}_i \}$ is an orthonormal basis for $\C^d$.
The representation~\eqref{eqn:eig-decomp} is called an \term{eigenvalue decomposition}.

An Hermitian matrix $\mtx{H}$ is positive semidefinite if and only if its eigenvalues $\lambda_i$ are all nonnegative.
Indeed, using the eigenvalue decomposition~\eqref{eqn:eig-decomp}, we see that
$$
\vct{u}^\adj \mtx{H} \vct{u}
	= \sum_{i=1}^n \lambda_i \cdot \vct{u}^\adj \vct{u}_i \vct{u}_i^\adj \vct{u}
	= \sum_{i=1}^n \lambda_i \cdot \abssq{ \smash{\vct{u}^\adj \vct{u}_i} }.
$$
To verify the forward direction, select $\vct{u} = \vct{u}_j$ for each index $j$.
The reverse direction should be obvious.

We define a \term{monomial} function of an Hermitian matrix $\mtx{H} \in \Sym_d$ by repeated multiplication:
$$
\mtx{H}^0 = \Id_d,
\quad \mtx{H}^1 = \mtx{H},
\quad \mtx{H}^2 = \mtx{H} \cdot \mtx{H},
\quad \mtx{H}^3 = \mtx{H} \cdot \mtx{H}^2,
\quad\text{etc.}
$$
For each nonnegative integer $r$, it is not hard to check that
\begin{equation} \label{eqn:monomial}
\mtx{H} = \sum_{i=1}^d \lambda_i \vct{u}_i \vct{u}_i^\adj
\quad\text{implies}\quad
\mtx{H}^r = \sum_{i=1}^d \lambda_i^r \vct{u}_i \vct{u}_i^\adj.
\end{equation}
In particular, $\mtx{H}^{2p}$ is positive semidefinite for each nonnegative integer $p$.

\subsection{Rayleigh's Variational Principle}

The \term{Rayleigh principle} is an attractive expression
for the maximum eigenvalue $\lambda_{\max}(\mtx{H})$
of an Hermitian matrix $\mtx{H} \in \Sym_d$.  This result states that
\begin{equation} \label{eqn:rayleigh}
\lambda_{\max}(\mtx{H}) = \max_{\norm{ \vct{u}} =1 } \ \vct{u}^\adj \mtx{H} \vct{u}.
\end{equation}
The maximum takes place over all unit-norm vectors $\vct{u} \in \C^d$.  The identity~\eqref{eqn:rayleigh}
follows from the Lagrange multiplier theorem and the existence of the eigenvalue decomposition~\eqref{eqn:eig-decomp}.
Similarly, the minimum eigenvalue $\lambda_{\min}(\mtx{H})$ satisfies
\begin{equation} \label{eqn:rayleigh-min}
\lambda_{\min}(\mtx{H}) = \min_{\norm{ \vct{u}} = 1 } \ \vct{u}^\adj \mtx{H} \vct{u}.
\end{equation}
We can obtain~\eqref{eqn:rayleigh-min} by applying~\eqref{eqn:rayleigh} to $-\mtx{H}$.

Rayleigh's principle implies that order relations for positive-semidefinite matrices
lead to order relations for their eigenvalues.

\begin{fact}[Monotonicity] \label{fact:weyl}
Let $\mtx{A}, \mtx{H} \in \Sym_d$ be Hermitian matrices.  Then 
\begin{equation*} \label{eqn:weyl}
\mtx{A} \psdle \mtx{H}
\quad\text{implies}\quad
\lambda_{\max}(\mtx{A}) \leq \lambda_{\max}(\mtx{H}).
\end{equation*}
\end{fact}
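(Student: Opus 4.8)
The plan is to reduce the claim directly to Rayleigh's variational principle~\eqref{eqn:rayleigh}. First I would unpack the hypothesis $\mtx{A} \psdle \mtx{H}$: by definition this means that the Hermitian matrix $\mtx{H} - \mtx{A}$ is positive semidefinite, so $\vct{u}^\adj (\mtx{H} - \mtx{A}) \vct{u} \geq 0$ for every $\vct{u} \in \C^d$. Expanding the quadratic form by linearity, this is exactly the pointwise inequality $\vct{u}^\adj \mtx{A} \vct{u} \leq \vct{u}^\adj \mtx{H} \vct{u}$, valid for all $\vct{u} \in \C^d$ and in particular for all unit-norm $\vct{u}$.

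Next I would pass to the maximum. For any unit vector $\vct{u}$, the pointwise bound together with~\eqref{eqn:rayleigh} applied to $\mtx{H}$ gives $\vct{u}^\adj \mtx{A} \vct{u} \leq \vct{u}^\adj \mtx{H} \vct{u} \leq \lambda_{\max}(\mtx{H})$. Since this holds for every unit-norm $\vct{u}$, I may take the maximum of the left-hand side over all such vectors; by~\eqref{eqn:rayleigh} applied to $\mtx{A}$ this maximum equals $\lambda_{\max}(\mtx{A})$, while the right-hand side is a constant. Hence $\lambda_{\max}(\mtx{A}) \leq \lambda_{\max}(\mtx{H})$, which is the assertion.

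There is no genuine obstacle here: the only point requiring care is the elementary monotonicity of the maximum — if one function is dominated by another on a common domain, then so is its maximum — together with the observation that both extrema in~\eqref{eqn:rayleigh} are taken over the very same set of unit vectors in $\C^d$. For completeness I would also note that the companion inequality $\lambda_{\min}(\mtx{A}) \leq \lambda_{\min}(\mtx{H})$ follows from the displayed implication applied to $-\mtx{H} \psdle -\mtx{A}$, using~\eqref{eqn:rayleigh-min}.
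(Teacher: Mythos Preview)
Your proof is correct and follows essentially the same route as the paper: both arguments extract the pointwise inequality $\vct{u}^\adj \mtx{A} \vct{u} \leq \vct{u}^\adj \mtx{H} \vct{u}$ from the hypothesis and then invoke Rayleigh's principle~\eqref{eqn:rayleigh}. The paper's only cosmetic difference is that it reaches the pointwise inequality via the nonnegativity of the eigenvalues of $\mtx{H}-\mtx{A}$ and~\eqref{eqn:rayleigh-min}, whereas you read it off directly from the definition of positive semidefiniteness---your version is, if anything, slightly more streamlined.
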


\begin{proof}
The condition $\mtx{A} \psdle \mtx{H}$ implies that the eigenvalues of
$\mtx{H} - \mtx{A}$ are nonnegative.  Therefore, Rayleigh's principle~\eqref{eqn:rayleigh-min}
yields
$$
0 \leq \lambda_{\min}(\mtx{H} - \mtx{A})
	= \min_{\norm{\vct{u}} = 1} \left( \vct{u}^\adj \mtx{H} \vct{u} - \vct{u}^\adj \mtx{A} \vct{u} \right)
	\leq \vct{v}^\adj \mtx{H} \vct{v} - \vct{v}^\adj \mtx{A} \vct{v}
$$
for any unit-norm vector $\vct{v}$.  Select a unit-norm vector $\vct{v}$
for which $\lambda_{\max}(\mtx{A}) = \vct{v}^\adj \mtx{A} \vct{v}$,
and then rearrange:
$$
\lambda_{\max}( \mtx{A} ) = \vct{v}^\adj \mtx{A} \vct{v} \leq \vct{v}^\adj \mtx{H} \vct{v}
	\leq \lambda_{\max}(\mtx{H}).
$$
The last relation is Rayleigh's principle~\eqref{eqn:rayleigh}.
\end{proof}

\subsection{The Trace}

The \term{trace} of a square matrix $\mtx{B} \in \mathbb{M}^{d \times d}$ is defined as
\begin{equation} \label{eqn:trace}
\trace \mtx{B} := \sum_{i=1}^d b_{ii}.
\end{equation}
It is clear that the trace is a linear functional on $\mathbb{M}^{d \times d}$.
By direct calculation, one may verify that
$$
\trace[ \mtx{BC} ] = \trace[ \mtx{CB} ]
\quad\text{for all $\mtx{B} \in \mathbb{M}^{d \times r}$ and $\mtx{C} \in \mathbb{M}^{r \times d}$.}
$$
This property is called the \term{cyclicity} of the trace.

The trace of an Hermitian matrix $\mtx{H} \in \Sym_d$ can also be expressed in terms of its eigenvalues:
\begin{equation} \label{eqn:trace-eig}
\trace \mtx{H} = \sum_{i=1}^d \lambda_i.
\end{equation}
This formula follows when we introduce the eigenvalue decomposition~\eqref{eqn:eig-decomp}
into~\eqref{eqn:trace}.
Then we invoke the linearity and the cyclicity properties of the trace,
as well as the properties of an orthonormal basis. We also instate the convention
that monomials bind before the trace: $\trace \mtx{H}^r := \trace[ \mtx{H}^r ]$
for each nonnegative integer $r$.

\subsection{The Spectral Norm}

The \term{spectral norm} of a matrix $\mtx{B} \in \mathbb{M}^{d_1 \times d_2}$ is defined as
\begin{equation*} \label{eqn:spectral-norm}
\norm{ \mtx{B} } := \max_{\norm{\vct{u}} = 1}\ \norm{ \mtx{B} \vct{u} }.
\end{equation*}
The maximum takes place over unit-norm vectors $\vct{u} \in \C^{d_2}$.
We have the important identity
\begin{equation} \label{eqn:B*}
\normsq{ \mtx{B} } = \norm{ \smash{\mtx{B}^\adj \mtx{B}} } = \norm{ \smash{\mtx{BB}^\adj} }
\quad\text{for every matrix $\mtx{B}$.}
\end{equation}
Furthermore,
the spectral norm is a convex function, and it satisfies the triangle inequality.

For an Hermitian matrix, the spectral norm can be written in terms of the eigenvalues:
\begin{equation} \label{eqn:norm-herm}
\norm{ \mtx{H} } = \max\big\{ \lambda_{\max}(\mtx{H}), \ - \lambda_{\min}(\mtx{H}) \big\}
\quad\text{for each Hermitian matrix $\mtx{H}$.}
\end{equation}
As a consequence,
\begin{equation} \label{eqn:norm-psd}
\norm{ \mtx{A} } = \lambda_{\max}(\mtx{A})
\quad\text{for each positive-semidefinite matrix $\mtx{A}$.}
\end{equation}
This discussion implies that
\begin{equation} \label{eqn:norm-power}
\norm{ \mtx{H} }^{2p} = \norm{ \smash{\mtx{H}^{2p}} }
\quad\text{for each Hermitian $\mtx{H}$ and each nonnegative integer $p$.}
\end{equation}
Use the relations~\eqref{eqn:monomial} and~\eqref{eqn:norm-herm}
to verify this fact.

\subsection{Some Spectral Norm Inequalities}

We need some basic inequalities for the spectral norm.  First,
note that
\begin{equation} \label{eqn:norm-trace-bd}
\norm{ \mtx{A} } \leq \trace \mtx{A}
\quad\text{when $\mtx{A}$ is positive semidefinite.}
\end{equation}
This point follows from~\eqref{eqn:norm-psd}
and~\eqref{eqn:trace-eig} because the eigenvalues
of a positive-semidefinite matrix are nonnegative.

The next result uses the spectral norm to bound the trace of a product.

\begin{fact}[Bound for the Trace of a Product] \label{fact:trace-dual}
Consider Hermitian matrices $\mtx{A}, \mtx{H} \in \Sym_d$, and assume that $\mtx{A}$ is positive semidefinite.  Then
\begin{equation*} \label{eqn:trace-dual}
\trace[ \mtx{HA} ] \leq \norm{\mtx{H}} \cdot \trace \mtx{A}.
\end{equation*}
\end{fact}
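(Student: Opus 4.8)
The idea is to diagonalize the positive-semidefinite factor $\mtx{A}$ and reduce the trace to a nonnegative combination of Rayleigh quotients of $\mtx{H}$. First I would invoke the eigenvalue decomposition~\eqref{eqn:eig-decomp}, writing $\mtx{A} = \sum_{i=1}^d \lambda_i \vct{u}_i \vct{u}_i^\adj$ where $\{\vct{u}_i\}$ is an orthonormal basis of $\C^d$ and each $\lambda_i \geq 0$, since $\mtx{A}$ is positive semidefinite. Substituting this into $\trace[\mtx{HA}]$ and using linearity of the trace gives $\trace[\mtx{HA}] = \sum_{i=1}^d \lambda_i \trace\big[ \mtx{H} \vct{u}_i \vct{u}_i^\adj \big]$.

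Next I would apply the cyclicity of the trace to each term: $\trace\big[ \mtx{H}\vct{u}_i \vct{u}_i^\adj \big] = \trace\big[ \vct{u}_i^\adj \mtx{H} \vct{u}_i \big] = \vct{u}_i^\adj \mtx{H} \vct{u}_i$, the last equality because the argument is a scalar. Now each $\vct{u}_i$ is a unit vector, so Rayleigh's principle~\eqref{eqn:rayleigh} gives $\vct{u}_i^\adj \mtx{H} \vct{u}_i \leq \lambda_{\max}(\mtx{H})$, and~\eqref{eqn:norm-herm} gives $\lambda_{\max}(\mtx{H}) \leq \norm{\mtx{H}}$.

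Finally, because every coefficient $\lambda_i$ is nonnegative, these bounds may be summed with the correct orientation:
$$
\trace[\mtx{HA}] = \sum_{i=1}^d \lambda_i \cdot \big( \vct{u}_i^\adj \mtx{H} \vct{u}_i \big) \leq \norm{\mtx{H}} \sum_{i=1}^d \lambda_i = \norm{\mtx{H}} \cdot \trace \mtx{A},
$$
where the last equality is the eigenvalue formula~\eqref{eqn:trace-eig} for the trace. The only point that genuinely uses a hypothesis—and hence the only place where the argument could go wrong—is the nonnegativity of the $\lambda_i$, which is precisely what lets us pass from the pointwise bound on the Rayleigh quotients to the summed inequality; this is exactly the positive-semidefiniteness of $\mtx{A}$. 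Everything else is bookkeeping with linearity and cyclicity of the trace, so I do not anticipate a substantive obstacle.
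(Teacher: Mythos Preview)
Your proof is correct and follows essentially the same route as the paper: diagonalize $\mtx{A}$, rewrite $\trace[\mtx{HA}]$ as a nonnegative combination of Rayleigh quotients via linearity and cyclicity, bound each quotient by $\lambda_{\max}(\mtx{H}) \leq \norm{\mtx{H}}$, and identify $\sum_i \lambda_i = \trace \mtx{A}$. The paper's presentation is terser but the argument is identical.
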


\begin{proof}
Introducing the eigenvalue decomposition $\mtx{A} = \sum_i \lambda_i \vct{u}_i \vct{u}_i^\adj$, we see that
$$
\trace[ \mtx{HA} ] = \sum_i \lambda_i \trace[ \mtx{H} \vct{u}_i \vct{u}_i^\adj ]
	= \sum_i \lambda_i \vct{u}_i^\adj \mtx{H} \vct{u}_i
	\leq \lambda_{\max}(\mtx{H}) \cdot \sum_i \lambda_i 
	\leq \norm{ \mtx{H} } \cdot \trace \mtx{A}.
$$
The first two relations follow from linearity and cyclicity of the trace.  The first inequality
depends on Rayleigh's principle~\eqref{eqn:rayleigh} and the nonnegativity of the
eigenvalues $\lambda_i$.  The last bound follows from~\eqref{eqn:norm-herm}.
\end{proof}

We also need a bound for the norm of a sum of squared positive-semidefinite matrices.

\begin{fact}[Bound for a Sum of Squares] \label{fact:sum-squares}
Consider positive-semidefinite matrices $\mtx{A}_1, \dots, \mtx{A}_n \in \Sym_d$.  Then
$$
\norm{ \sum_{i=1}^n \mtx{A}_i^2 } \leq
\max\nolimits_i \norm{ \mtx{A}_i } \cdot \norm{ \sum_{i=1}^n \mtx{A}_i }.
$$
\end{fact}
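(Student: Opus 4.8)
The plan is to reduce the matrix inequality to an elementary scalar estimate and then sum. The crux is the pointwise semidefinite bound
$$
\mtx{A}_i^2 \psdle \norm{\mtx{A}_i} \cdot \mtx{A}_i
\qquad\text{for each index } i,
$$
which I would prove using the eigenvalue decomposition $\mtx{A}_i = \sum_j \lambda_j \vct{u}_j \vct{u}_j^\adj$ from~\eqref{eqn:eig-decomp}. Since $\mtx{A}_i$ is positive semidefinite, each $\lambda_j \ge 0$; and $\lambda_j \le \lambda_{\max}(\mtx{A}_i) = \norm{\mtx{A}_i}$ by~\eqref{eqn:norm-psd}. The monomial rule~\eqref{eqn:monomial} gives $\mtx{A}_i^2 = \sum_j \lambda_j^2 \vct{u}_j \vct{u}_j^\adj$, so
$$
\norm{\mtx{A}_i} \cdot \mtx{A}_i - \mtx{A}_i^2 = \sum_j \lambda_j \big( \norm{\mtx{A}_i} - \lambda_j \big)\, \vct{u}_j \vct{u}_j^\adj
$$
is a nonnegative combination of the rank-one positive-semidefinite matrices $\vct{u}_j \vct{u}_j^\adj$, hence positive semidefinite.

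Next I would sum. Set $M := \max\nolimits_k \norm{\mtx{A}_k} \ge 0$. From the pointwise bound, $\mtx{A}_i^2 \psdle \norm{\mtx{A}_i}\cdot\mtx{A}_i \psdle M \cdot \mtx{A}_i$, because $(M - \norm{\mtx{A}_i})\mtx{A}_i$ is positive semidefinite. Since addition and nonnegative scaling preserve the semidefinite order, adding these inequalities over $i$ yields
$$
\sum_{i=1}^n \mtx{A}_i^2 \psdle M \cdot \sum_{i=1}^n \mtx{A}_i.
$$
Both sides are positive semidefinite---the left as a sum of squares, the right as a nonnegative multiple of a sum of positive-semidefinite matrices.

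Finally I would read off the norm bound from the monotonicity of the maximum eigenvalue. Applying Fact~\ref{fact:weyl} to the last display and then using~\eqref{eqn:norm-psd} together with the homogeneity $\lambda_{\max}(M\cdot\mtx{B}) = M\cdot\lambda_{\max}(\mtx{B})$ for $M \ge 0$,
$$
\norm{ \sum_{i=1}^n \mtx{A}_i^2 } = \lambda_{\max}\!\Big( \sum_{i=1}^n \mtx{A}_i^2 \Big)
\le \lambda_{\max}\!\Big( M \cdot \sum_{i=1}^n \mtx{A}_i \Big)
= M \cdot \norm{ \sum_{i=1}^n \mtx{A}_i },
$$
which is exactly the claimed estimate. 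I do not anticipate any real obstacle; the one point that requires care is the pointwise bound $\mtx{A}_i^2 \psdle \norm{\mtx{A}_i}\mtx{A}_i$, which is precisely where positive semidefiniteness of the summands enters (the analogue fails for indefinite $\mtx{A}_i$).
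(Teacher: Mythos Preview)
Your proof is correct and follows essentially the same route as the paper: establish the pointwise semidefinite bound $\mtx{A}_i^2 \psdle M\,\mtx{A}_i$ via the eigenvalue decomposition, sum over $i$, and then invoke Fact~\ref{fact:weyl} together with~\eqref{eqn:norm-psd}. The only cosmetic difference is that the paper states the key claim directly as $\mtx{A}^2 \psdle M\cdot\mtx{A}$ whenever $\lambda_{\max}(\mtx{A}) \le M$, rather than first proving $\mtx{A}_i^2 \psdle \norm{\mtx{A}_i}\,\mtx{A}_i$ and then enlarging the scalar to $M$.
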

\begin{proof}
Let $\mtx{A}$ be positive semidefinite.
We claim that
\begin{equation} \label{eqn:square-claim}
\mtx{A}^2 \psdle M \cdot \mtx{A}
\quad\text{whenever $\lambda_{\max}(\mtx{A}) \leq M$.}
\end{equation}
Indeed, introducing the eigenvalue decomposition $\mtx{A} = \sum_i \lambda_i \vct{u}_i \vct{u}_i^\adj$,
we find that
$$
M \cdot \mtx{A} - \mtx{A}^2
	= M \cdot \sum_i \lambda_i \vct{u}_i \vct{u}_i^\adj
		- \sum_i \lambda_i^2 \vct{u}_i\vct{u}_i^\adj
	= \sum_i \big(M - \lambda_i \big) \lambda_i \cdot  \vct{u}_i \vct{u}_i^\adj.
$$
The first relation uses~\eqref{eqn:monomial}.  Since $0 \leq \lambda_i \leq M$,
the scalar coefficients in the sum are nonnegative.  Therefore, the matrix
$M \cdot \mtx{A} - \mtx{A}^2$ is positive semidefinite, which
is what we needed to show.

Select $M := \max_i \lambda_{\max}(\mtx{A}_i)$.  The inequality~\eqref{eqn:square-claim} ensures that
$$
\mtx{A}_i^2 \psdle M \cdot \mtx{A}_i
\quad\text{for each index $i$.}
$$
Summing these relations, we see that
$$
\sum_{i=1}^n \mtx{A}_i^2 \psdle M \cdot \sum_{i=1}^n \mtx{A}_i.
$$
The monotonicity principle, Fact~\ref{fact:weyl}, yields the inequality
$$
\lambda_{\max}\left( \sum_{i=1}^n \mtx{A}_i^2 \right)
	\leq \lambda_{\max}\left( M \cdot \sum_{i=1}^n \mtx{A}_i \right)
	= M \cdot \lambda_{\max}\left( \sum_{i=1}^n \mtx{A}_i \right).
$$
We have used the fact that the maximum eigenvalue of an Hermitian matrix is positive homogeneous.
Finally, recall that, per~\eqref{eqn:norm-psd},
the spectral norm of a positive-semidefinite matrix equals its maximum eigenvalue.
\end{proof}	

\subsection{GM--AM Inequality for the Trace}

We require another substantial matrix inequality, which is one
(of several) matrix analogs of the inequality between the geometric mean
and the arithmetic mean. 
\begin{fact}[GM--AM Trace Inequality] \label{fact:trace-gm-am}
Consider Hermitian matrices $\mtx{H}, \mtx{W}, \mtx{Y} \in \Sym_d$.
For each nonnegative integer $r$ and each integer $q$ in the range $0 \leq q \leq 2r$,
\begin{equation} \label{eqn:trace-gm-am}
\trace\big[ \mtx{H}\mtx{W}^q \mtx{H} \mtx{Y}^{2r-q} \big]
+ \trace\big[ \mtx{H}\mtx{W}^{2r-q} \mtx{H} \mtx{Y}^{q} \big]
	\leq \trace\big[ \mtx{H}^2 \cdot \big( \mtx{W}^{2r} + \mtx{Y}^{2r} \big) \big].
\end{equation}
In particular,
\begin{equation*} \sum_{q=0}^{2r} \trace\big[ \mtx{H}\mtx{W}^q \mtx{H} \mtx{Y}^{2r-q} \big]
	\leq \frac{2r+1}{2} \trace\big[ \mtx{H}^2 \cdot \big( \mtx{W}^{2r} + \mtx{Y}^{2r} \big) \big].
\end{equation*}
\end{fact}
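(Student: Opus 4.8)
The plan is to diagonalize $\mtx{W}$ and $\mtx{Y}$ and reduce the matrix inequality~\eqref{eqn:trace-gm-am} to an elementary inequality for real scalars, weighted by nonnegative coefficients. First I would write eigenvalue decompositions $\mtx{W} = \sum_j \mu_j\,\vct{v}_j\vct{v}_j^\adj$ and $\mtx{Y} = \sum_k \nu_k\,\vct{w}_k\vct{w}_k^\adj$, where $\{\vct{v}_j\}$ and $\{\vct{w}_k\}$ are orthonormal bases of $\C^d$ and the $\mu_j,\nu_k$ are real. Set $a_{jk} := \abssq{\vct{v}_j^\adj \mtx{H} \vct{w}_k} \ge 0$. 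Using the monomial rule~\eqref{eqn:monomial}, the cyclicity of the trace, and $\mtx{H} = \mtx{H}^\adj$, a short computation gives $\trace[\mtx{H}\mtx{W}^q\mtx{H}\mtx{Y}^{2r-q}] = \sum_{j,k} a_{jk}\,\mu_j^q \nu_k^{2r-q}$, and likewise with $q$ replaced by $2r-q$. For the right-hand side, writing $\mtx{H}^2 = \mtx{H}\,\big(\sum_k \vct{w}_k\vct{w}_k^\adj\big)\,\mtx{H}$ inside the trace shows $\trace[\mtx{H}^2\mtx{W}^{2r}] = \sum_{j,k} a_{jk}\,\mu_j^{2r}$, and inserting the resolution of the identity in the $\{\vct{v}_j\}$ basis instead gives $\trace[\mtx{H}^2\mtx{Y}^{2r}] = \sum_{j,k} a_{jk}\,\nu_k^{2r}$.

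With this bookkeeping done, inequality~\eqref{eqn:trace-gm-am} is equivalent to
$$
\sum_{j,k} a_{jk}\big(\mu_j^q\nu_k^{2r-q} + \mu_j^{2r-q}\nu_k^q\big) \ \le\ \sum_{j,k} a_{jk}\big(\mu_j^{2r} + \nu_k^{2r}\big),
$$
and, since each $a_{jk} \ge 0$, it suffices to establish the termwise scalar bound $x^q y^{2r-q} + x^{2r-q} y^q \le x^{2r} + y^{2r}$ for all real $x, y$ and all integers $0 \le q \le 2r$. To prove this I would first reduce to $x, y \ge 0$: because $2r$ is even we have $x^{2r} = \abs{x}^{2r}$ and $y^{2r} = \abs{y}^{2r}$, while $x^q y^{2r-q} \le \abs{x}^q \abs{y}^{2r-q}$ and similarly for the other cross term. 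For $x, y \ge 0$ the difference of the two sides factors as $(x^q - y^q)(x^{2r-q} - y^{2r-q})$; since $t \mapsto t^q$ and $t \mapsto t^{2r-q}$ are both nondecreasing on $[0,\infty)$, the two factors share a sign, so the product is nonnegative. This proves~\eqref{eqn:trace-gm-am}.

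For the displayed ``in particular'' consequence, I would sum~\eqref{eqn:trace-gm-am} over $q = 0, 1, \dots, 2r$. Reindexing $q \mapsto 2r - q$ shows that $\sum_{q=0}^{2r} \trace[\mtx{H}\mtx{W}^q\mtx{H}\mtx{Y}^{2r-q}] = \sum_{q=0}^{2r} \trace[\mtx{H}\mtx{W}^{2r-q}\mtx{H}\mtx{Y}^q]$, so the sum of the left-hand sides of~\eqref{eqn:trace-gm-am} equals $2\sum_{q=0}^{2r} \trace[\mtx{H}\mtx{W}^q\mtx{H}\mtx{Y}^{2r-q}]$, while the sum of the right-hand sides is $(2r+1)\,\trace[\mtx{H}^2(\mtx{W}^{2r} + \mtx{Y}^{2r})]$. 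Dividing by $2$ gives the claim.

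The trace bookkeeping in the first paragraph is routine but slightly error-prone, since one must keep straight which eigenbasis pairs with $\mtx{W}$ and which with $\mtx{Y}$; the only genuinely substantive step is the scalar inequality, and the one subtlety there is that the eigenvalues $\mu_j, \nu_k$ may be negative --- which is exactly why the reduction to nonnegative scalars via absolute values must precede the sign-of-the-factors argument.
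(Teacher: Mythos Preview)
Your proof is correct and follows essentially the same route as the paper's: diagonalize $\mtx{W}$ and $\mtx{Y}$, rewrite each trace as a double sum weighted by the nonnegative coefficients $a_{jk}=\abssq{\vct{v}_j^\adj\mtx{H}\vct{w}_k}$, and reduce to a termwise scalar bound after taking absolute values to handle possibly negative eigenvalues. The only difference is in the proof of the scalar inequality: the paper proves $\lambda^\theta\mu^{1-\theta}+\lambda^{1-\theta}\mu^\theta\le\lambda+\mu$ for $\lambda,\mu\ge 0$ by observing the left side is convex in $\theta$ and checking the endpoints, whereas you factor $x^{2r}+y^{2r}-x^qy^{2r-q}-x^{2r-q}y^q=(x^q-y^q)(x^{2r-q}-y^{2r-q})$ and use monotonicity---a slightly more elementary variant that exploits the integer exponents.
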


\begin{proof}
The result~\eqref{eqn:trace-gm-am} is a matrix version of the following numerical inequality.
For $\lambda, \mu \geq 0$,
\begin{equation} \label{eqn:heinz}
\lambda^{\theta} \mu^{1 - \theta} + \lambda^{1-\theta} \mu^{\theta}
	\leq \lambda + \mu
	\quad\text{for each $\theta \in [0,1]$.}
\end{equation}
To verify this bound, we may assume that $\lambda, \mu > 0$ because
it is trivial to check when either $\lambda$ or $\mu$ equals zero.
Notice that the left-hand side of the bound is a
convex function of $\theta$ on the interval $[0,1]$.  This point follows easily from
the representation
$$
f(\theta) := \lambda^{\theta} \mu^{1 - \theta} + \lambda^{1-\theta} \mu^{\theta}
	= \econst^{\theta \log \lambda + (1-\theta) \log \mu}
	+ \econst^{(1-\theta) \log \lambda + \theta \log \mu}.
$$
The value of the convex function $f$ on the interval $[0,1]$ is controlled by
the maximum value it achieves at one of the endpoints:
$$
f(\theta) \leq \max\big\{ f(0), f(1) \big\}
	= \lambda + \mu.
$$
This inequality coincides with~\eqref{eqn:heinz}.

To prove~\eqref{eqn:trace-gm-am} from~\eqref{eqn:heinz}, we use eigenvalue decompositions.
The case $r = 0$ is immediate, so we may assume that $r \geq 1$.
Let $q$ be an integer in the range $0 \leq q \leq 2r$.  Introduce
eigenvalue decompositions:
$$
\mtx{W} = \sum_{i=1}^d \lambda_i \vct{u}_i \vct{u}_i^\adj
\quad\text{and}\quad
\mtx{Y} = \sum_{j=1}^d \mu_j \vct{v}_j \vct{v}_j^\adj.
$$
Calculate that
\begin{equation} \label{eqn:gm-am-temp}
\begin{aligned}
\trace \big[ \mtx{H}\mtx{W}^q \mtx{H} \mtx{Y}^{2r-q} \big]
	&= \trace \left[ \mtx{H} \big(\sum_{i=1}^d \lambda_i^q \vct{u}_i\vct{u}_i^\adj \big)
	\mtx{H} \big( \sum_{j=1}^d \mu_j^{2r-q} \vct{v}_j \vct{v}_j^\adj \big) \right] \\
	&=\sum_{i,j=1}^d \lambda_i^q \mu_j^{2r-q} \cdot
	\trace \big[ \mtx{H} \vct{u}_i \vct{u}_i^\adj \mtx{H} \vct{v}_j \vct{v}_j^\adj \big] \\
	&\leq \sum_{i,j=1}^d \abs{\lambda_i}^q \abs{\smash{\mu_j}}^{2r-q} \cdot\abssq{ \vct{u}_i^\adj \mtx{H} \vct{v}_j }.
\end{aligned}
\end{equation}
The first identity relies on the formula~\eqref{eqn:monomial} for the eigenvalue decomposition of a monomial.
The second step depends on the linearity of the trace.
In the last line, we rewrite the trace using cyclicity,
and the inequality emerges when we apply absolute values.
The representation $\abssq{ \smash{\vct{u}_i^\adj \mtx{H} \vct{v}_j} }$
emphasizes that this quantity is nonnegative, which we use
to justify several inequalities.

Invoking the inequality~\eqref{eqn:gm-am-temp} twice, we arrive at the bound
\begin{equation} \label{eqn:gm-am-temp2}
\begin{aligned}
\trace \big[ \mtx{H}\mtx{W}^q \mtx{H} \mtx{Y}^{2r-q} \big]
+ \trace \big[ \mtx{H}\mtx{W}^{2r-q} \mtx{H} \mtx{Y}^{q} \big]
	&\leq \sum_{i,j=1}^d \big( \abs{\lambda_i}^q \abs{\smash{\mu_j}}^{2r-q}
	+ \abs{\lambda_i}^{2r-q} \abs{\smash{\mu_j}}^{q}\big)
	\cdot \abssq{ \vct{u}_i^\adj \mtx{H} \vct{v}_j } \\
	&\leq \sum_{i,j=1}^d \big( \lambda_i^{2r} + \mu_j^{2r} \big)
	\cdot \abssq{ \vct{u}_i^\adj \mtx{H} \vct{v}_j }.
\end{aligned}
\end{equation}
The second inequality is~\eqref{eqn:heinz}, with $\theta = q/(2r)$ and $\lambda = \lambda_i^{2r}$
and $\mu = \mu_j^{2r}$.

It remains to rewrite the right-hand side of~\eqref{eqn:gm-am-temp2} a more recognizable form.  To that end,
observe that
$$
\begin{aligned}
\trace \big[ \mtx{H}\mtx{W}^q \mtx{H} \mtx{Y}^{2r-q} \big]
&+ \trace \big[ \mtx{H}\mtx{W}^{2r-q} \mtx{H} \mtx{Y}^{q} \big] \\
	&\leq \sum_{i,j=1}^d \big( \lambda_i^{2r} + \mu_j^{2r} \big)
	\cdot \trace \big[ \mtx{H} \vct{u}_i \vct{u}_i^\adj \mtx{H} \vct{v}_j \vct{v}_j^\adj \big] \\
	&= \trace \big[ \mtx{H} \big( \sum_{i=1}^d \lambda_i^{2r} \vct{u}_i \vct{u}_i^\adj \big) \mtx{H} 
	\big( \sum_{j=1}^d \vct{v}_j \vct{v}_j^\adj \big) \big]
	+ \trace \big[ \mtx{H} \big( \sum_{i=1}^d \vct{u}_i \vct{u}_i^\adj \big)
	\mtx{H} \big( \sum_{j=1}^d \mu_j^{2r} \vct{v}_j \vct{v}_j^\adj \big) \big] \\
	&= \trace \big[ \mtx{H}^2 \cdot \mtx{W}^{2r} \big] + \trace \big[ \mtx{H}^2 \cdot \mtx{Y}^{2r} \big].
\end{aligned}
$$
In the first step, we return the squared magnitude to its representation as a trace.  Then
we use linearity to draw the sums back inside the trace.  Next, invoke~\eqref{eqn:monomial}
to identify the powers $\mtx{W}^{2r}$ and $\mtx{Y}^{0} = \Id_d$ and $\mtx{W}^0 = \Id_d$ and $\mtx{Y}^{2r}$.
Last, use the cyclicity of the trace to combine the factors of $\mtx{H}$.
The result~\eqref{eqn:trace-gm-am} follows from the linearity of the trace.
\end{proof}

\begin{remark}[The Power of Abstraction]
There is a cleaner, but more abstract, proof of the inequality~\eqref{eqn:trace-gm-am}.
Consider the left- and right-multiplication operators
$$
\mathsf{W} : \mtx{H} \mapsto \mtx{WH}
\quad\text{and}\quad
\mathsf{Y} : \mtx{H} \mapsto \mtx{HY}.
$$
Observe that powers of $\mathsf{W}$ and $\mathsf{Y}$
correspond to left- and right-multiplication by powers of $\mtx{W}$ and $\mtx{Y}$. 
Now, the operators $\mathsf{W}$ and $\mathsf{Y}$ commute,
so there is a basis (orthonormal with respect to the trace inner product) for $\Sym_d$
in which they are simultaneously diagonalizable.
Representing the operators in this basis, we can use~\eqref{eqn:heinz} to check that
$$
\mathsf{W}^q \mathsf{Y}^{2r-q}
+ \mathsf{W}^{2r-q} \mathsf{Y}^{q}
\psdle \mathsf{W}^{2r} + \mathsf{Y}^{2r}.
$$
Now, calculate that
$$
\begin{aligned}
\trace\big[ \mtx{HW}^q \mtx{HY}^{2r-q} \big] + \trace\big[ \mtx{HW}^{2r-q} \mtx{HY}^{q} \big]
	&= \trace\big[ \mtx{H} \cdot \big(\mathsf{W}^q \mathsf{Y}^{2r-q} + \mathsf{W}^{2r-q} \mathsf{Y}^q \big)(\mtx{H}) \big] \\\
	&\leq \trace\big[ \mtx{H} \cdot \big(\mathsf{W}^{2r} + \mathsf{Y}^{2r} \big)(\mtx{H}) \big] \\
	&= \trace\big[ \mtx{H}^2 \cdot \big( \mtx{W}^{2r} + \mtx{Y}^{2r} \big) \big].
\end{aligned}
$$
We omit the details.
\end{remark}

\subsection{The Hermitian Dilation}

Last, we introduce the \term{Hermitian dilation} $\coll{H}(\mtx{B})$ of a rectangular matrix $\mtx{B} \in \mathbb{M}^{d_1 \times d_2}$.  This is the Hermitian matrix
\begin{equation} \label{eqn:dilation}
\coll{H}(\mtx{B}) := \begin{bmatrix} \mtx{0} & \mtx{B} \\ \mtx{B}^\adj & \mtx{0} \end{bmatrix}
	\in \Sym_{d_1 + d_2}.
\end{equation}
Note that the map $\coll{H}$ is real linear.  By direct calculation,
\begin{equation} \label{eqn:dilation-square}
\coll{H}(\mtx{B})^2 = \begin{bmatrix} \mtx{BB}^\adj & \mtx{0} \\ \mtx{0} & \mtx{B}^\adj \mtx{B} \end{bmatrix}.
\end{equation}
We also have the spectral-norm identity
\begin{equation} \label{eqn:dilation-norm}
\norm{ \coll{H}(\mtx{B}) } = \norm{ \mtx{B} }.
\end{equation}
To verify~\eqref{eqn:dilation-norm}, calculate that
$$
\normsq{\coll{H}(\mtx{B})}
	= \norm{ \coll{H}(\mtx{B})^2 }
	= \norm{ \begin{bmatrix} \mtx{BB}^\adj & \mtx{0} \\ \mtx{0} & \mtx{B}^\adj \mtx{B} \end{bmatrix} }
	= \max\big\{ \norm{ \smash{\mtx{BB}^\adj} }, \ \norm{ \smash{\mtx{B}^\adj \mtx{B}} } \big\} 
	= \normsq{ \mtx{B} }.
$$
The first identity is~\eqref{eqn:norm-power}; the second is~\eqref{eqn:dilation-square}.
The norm of a block-diagonal Hermitian matrix is the maximum spectral norm of a block,
which follows from the Rayleigh principle~\eqref{eqn:rayleigh} with a bit of work.
Finally, invoke the property~\eqref{eqn:B*}.

\section{Probability Background} \label{sec:probability}

This section contains some background material from
the field of probability.  Good references include the
books~\cite{LT91:Probability-Banach,GS01:Probability-Random,Tao12:Topics-Random}.

\subsection{Expectation}

The symbol $\Expect$ denotes the expectation operator.  We will not
define expectation formally or spend any energy on technical details.
No issues arise if we assume, for example, that all random variables are bounded.

We use brackets to enclose the argument
of the expectation when it is important for clarity, and we instate
the convention that nonlinear functions bind before expectation.
For instance, $\Expect X^p := \Expect[ X^p ]$ and
$\Expect \max\nolimits_i X_i := \Expect[ \max\nolimits_i X_i ]$.

Sometimes, we add a subscript to indicate a partial expectation.  For example,
if $J$ is a random variable, $\Expect_{J}$ refers to the average over $J$,
with all other random variables fixed.  We only use this notation
when $J$ is independent from the other random variables, so there
are no complications.  In particular, we can compute iterated expectations:
$\Expect[ \Expect_J [ \cdot ] ] = \Expect[ \cdot ] $ whenever all the
expectations are finite.

\subsection{Random Matrices}

A \term{random matrix} is a matrix whose entries are complex random variables,
not necessarily independent.
We compute the expectation of a random matrix $\mtx{Z}$ componentwise:
\begin{equation*} \label{eqn:expect-matrix}
(\Expect[ \mtx{Z} ] )_{ij} = \Expect[ z_{ij} ]
\quad\text{for each pair $(i, j)$ of indices.}
\end{equation*}
As in the scalar case, if $\mtx{W}$ and $\mtx{Z}$ are independent,
$$
\Expect[ \mtx{WZ} ] = (\Expect \mtx{W} )(\Expect \mtx{Z}).
$$
Since the expectation is linear, it also commutes with all of the simple linear
operations we perform on matrices.

It suffices to take a na{\"i}ve view of independence, expectation, and so forth.
For the technically inclined, let $(\Omega, \coll{F}, \mathbb{P})$
be a probability space.  A $d_1 \times d_2$ random matrix $\mtx{Z}$
is simply a measurable function
$$
\mtx{Z} : \Omega \to \mathbb{M}^{d_1 \times d_2}.
$$
A family $\{ \mtx{Z}_i : i = 1, \dots, n \}$ of random matrices is independent
when
$$
\Prob{ \mtx{Z}_i \in E_i \text{ for $i = 1, \dots, n$} } = \prod_{i=1}^n \Prob{ \mtx{Z}_i \in E_i }.
$$
for any collection of Borel\footnote{Open sets in $\mathbb{M}^{d_1 \times d_2}$
are defined with respect to the metric topology induced by the spectral norm.}
subsets $E_i \subset \mathbb{M}^{d_1 \times d_2}$.

\subsection{Inequalities for Expectation}

We need several basic inequalities for expectation.  We set these
out for future reference.
Let $X, Y$ be (arbitrary) real random variables.  The Cauchy--Schwarz inequality states that
\begin{equation} \label{eqn:expect-cs}
\abs{ \Expect[ XY ] } \leq \big(\Expect X^2 \big)^{1/2} \cdot \big(\Expect Y^2 \big)^{1/2}.
\end{equation}
For $r \geq 1$, the triangle inequality states that
\begin{equation} \label{eqn:Lr-triangle}
\big( \Expect \abs{ X + Y }^r \big)^{1/r}
	\leq \big( \Expect \abs{X}^r \big)^{1/r} + \big( \Expect \abs{Y}^r \big)^{1/r}.
\end{equation}
Each of these inequalities is vacuous precisely when its right-hand side is infinite.

Jensen's inequality describes how expectation interacts with a convex or concave function;
cf.~\eqref{eqn:convexity}.
Let $X$ be a random variable taking values in a finite-dimensional linear space $V$,
and let $f : V \to \R$ be a function.  Then
\begin{equation} \label{eqn:jensen}
\begin{aligned}
f(\Expect X) &\leq \Expect f(X) \quad\text{when $f$ is convex, and} \\
\Expect f(X) &\leq f(\Expect X) \quad\text{when $f$ is concave.} \\
\end{aligned}
\end{equation}
The inequalities~\eqref{eqn:jensen} also hold when we replace $\Expect$ with a partial expectation.
Let us emphasize that these bounds do require that all of the expectations exist.

\subsection{Symmetrization}

Symmetrization is an important technique for studying the expectation
of a function of independent random variables.  The idea is to inject
auxiliary randomness into the function.  Then we condition on the
original random variables and average with respect to the extra
randomness.  When the auxiliary random variables are more pliable,
this approach can lead to significant simplifications.

A \term{Rademacher} random variable $\eps$ takes the two values $\pm 1$ with
equal probability.  The following result shows how we can use Rademacher random variables to
study a sum of independent random matrices.

\begin{fact}[Symmetrization] \label{fact:symmetrization}
Let $\mtx{S}_1, \dots, \mtx{S}_n \in \mathbb{M}^{d_1 \times d_2}$
be independent random matrices.
Let $\eps_1, \dots, \eps_n$ be independent Rademacher random variables that are
also independent from the random matrices.  For each $r \geq 1$,
$$
\frac{1}{2} \cdot \left( \Expect \norm{ \sum_{i=1}^n \eps_i \mtx{S}_i }^r \right)^{1/r}
	\leq \left( \Expect \norm{ \sum_{i=1}^n (\mtx{S}_i - \Expect \mtx{S}_i) }^r \right)^{1/r}
	\leq 2 \cdot \left( \Expect \norm{ \sum_{i=1}^n \eps_i \mtx{S}_i }^r \right)^{1/r}.
$$
This result holds whenever $\Expect \norm{\mtx{S}_i}^r < \infty$ for each index $i$.
\end{fact}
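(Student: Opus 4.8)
The plan is to compare the two sums by coupling the family $\{\mtx{S}_i\}$ with an independent copy of itself: Jensen's inequality lets us trade the centering constants $\Expect\mtx{S}_i$ for the copies, the resulting differences are symmetric, which lets us insert the Rademacher signs for free, and the triangle inequality disentangles the copy at the end. Fix $r\ge 1$. As a preliminary, note that $\mtx{M}\mapsto\norm{\mtx{M}}^r$ is a convex function on $\mathbb{M}^{d_1\times d_2}$, being the nondecreasing convex function $t\mapsto t^r$ on $[0,\infty)$ composed with the convex spectral norm; this is what licenses the applications of Jensen's inequality~\eqref{eqn:jensen} below, including in its conditional form. Enlarge the probability space to carry an independent copy $\{\mtx{S}_1',\dots,\mtx{S}_n'\}$ of $\{\mtx{S}_1,\dots,\mtx{S}_n\}$, independent also of $\eps_1,\dots,\eps_n$, and write $\Expect_{\mtx{S}'}$ for the partial expectation over the primed family. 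The hypothesis $\Expect\norm{\mtx{S}_i}^r<\infty$ ensures that every expectation appearing below is finite.

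For the upper bound, use $\Expect_{\mtx{S}'}\mtx{S}_i' = \Expect\mtx{S}_i$ to rewrite $\sum_i(\mtx{S}_i-\Expect\mtx{S}_i) = \Expect_{\mtx{S}'}\sum_i(\mtx{S}_i-\mtx{S}_i')$, and apply conditional Jensen to move the norm inside the inner average:
\[
\Expect\norm{\sum_{i=1}^n(\mtx{S}_i-\Expect\mtx{S}_i)}^r \;\le\; \Expect\norm{\sum_{i=1}^n(\mtx{S}_i-\mtx{S}_i')}^r .
\]
The decisive point is that the family $\{\mtx{S}_i-\mtx{S}_i'\}_{i=1}^n$ has the same joint distribution as $\{\eps_i(\mtx{S}_i-\mtx{S}_i')\}_{i=1}^n$: for any fixed sign pattern, negating the $i$-th difference merely swaps $\mtx{S}_i$ and $\mtx{S}_i'$, and since these are i.i.d.\ while the pairs are independent across $i$, the joint law does not change. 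Hence the right-hand side above equals $\Expect\norm{\sum_i\eps_i(\mtx{S}_i-\mtx{S}_i')}^r$, and the $L^r$ triangle inequality~\eqref{eqn:Lr-triangle}, together with the fact that $\{\mtx{S}_i'\}$ is distributed like $\{\mtx{S}_i\}$, bounds the $r$-th root of this quantity by $2\,(\Expect\norm{\sum_i\eps_i\mtx{S}_i}^r)^{1/r}$. This is exactly the upper inequality.

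For the lower bound, run the same three ingredients—the independent copies, conditional Jensen, and the symmetry of the differences $\mtx{S}_i-\mtx{S}_i'$—in the reverse direction: express the Rademacher-modulated sum through the copies, apply conditional Jensen (now holding $\mtx{S}$ and the signs fixed and averaging over $\mtx{S}'$) to move the norm inside, use the symmetry $\eps_i(\mtx{S}_i-\mtx{S}_i')\overset{\mathrm{d}}{=}\mtx{S}_i-\mtx{S}_i'$ to delete the signs, and finally split $\mtx{S}_i-\mtx{S}_i' = (\mtx{S}_i-\Expect\mtx{S}_i)-(\mtx{S}_i'-\Expect\mtx{S}_i')$ and apply~\eqref{eqn:Lr-triangle}, using once more that the primed centered family is distributed like the unprimed one. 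This produces the factor $2$ on the other side, and rearranging gives the claimed lower bound.

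I expect the only genuinely delicate step to be the symmetry claim—recognizing that each $\mtx{S}_i-\mtx{S}_i'$ is a symmetric random matrix and that inserting independent Rademacher signs leaves the law of the whole family unchanged—after which everything is an orchestration of Jensen's inequality~\eqref{eqn:jensen}, the $L^r$ triangle inequality~\eqref{eqn:Lr-triangle}, and careful bookkeeping of which variables are being averaged at each stage. I would also keep the integrability hypothesis $\Expect\norm{\mtx{S}_i}^r<\infty$ in view throughout, since it is what guarantees that Jensen's inequality and~\eqref{eqn:Lr-triangle} may legitimately be applied.
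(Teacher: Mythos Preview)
Your proof of the upper bound is correct and essentially identical to the paper's: introduce an independent copy, apply Jensen's inequality to pass from $\Expect \mtx{S}_i$ to $\mtx{S}_i'$, insert the Rademacher signs via the symmetry of $\mtx{S}_i - \mtx{S}_i'$, and split with the $L^r$ triangle inequality. The paper writes out the case $r=1$ and then remarks that general $r$ follows using the convexity of $\norm{\cdot}^r$ and~\eqref{eqn:Lr-triangle}; you simply treat general $r$ from the start.

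The lower bound, however, has a genuine gap---though the fault lies with the stated inequality as much as with your sketch. As written, the lower inequality is false: take $n=1$ and $\mtx{S}_1 = \mtx{A}$ a fixed nonzero matrix, so that the middle term vanishes while the left-hand side equals $\tfrac{1}{2}\norm{\mtx{A}} > 0$. Your Jensen step cannot produce $\norm{\sum_i \eps_i \mtx{S}_i}^r$ on the left: averaging over $\mtx{S}'$ with $\mtx{S}$ and $\vct{\eps}$ held fixed gives $\Expect_{\mtx{S}'}[\eps_i\mtx{S}_i'] = \eps_i\,\Expect\mtx{S}_i$, so $\Expect_{\mtx{S}'}\bigl[\sum_i\eps_i(\mtx{S}_i-\mtx{S}_i')\bigr] = \sum_i\eps_i(\mtx{S}_i-\Expect\mtx{S}_i)$, and what your argument actually establishes is
\[
\tfrac{1}{2}\Bigl(\Expect\norm{\textstyle\sum_i\eps_i(\mtx{S}_i-\Expect\mtx{S}_i)}^r\Bigr)^{1/r} \;\le\; \Bigl(\Expect\norm{\textstyle\sum_i(\mtx{S}_i-\Expect\mtx{S}_i)}^r\Bigr)^{1/r}.
\]
This is the correct inequality, and it coincides with the one in the statement precisely when $\Expect\mtx{S}_i=\mtx{0}$. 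The paper does not write out the lower-bound argument either (``a similar procedure, so we omit the demonstration''), and its only invocation of the lower bound, in Section~\ref{sec:lower}, is for centered summands, so the slip is harmless downstream---but you should record the centering hypothesis rather than claim the inequality in full generality.
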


\begin{proof}
For notational simplicity, assume that $r = 1$.  We discuss the general case
at the end of the argument.

Let $\{\mtx{S}_i' : i = 1, \dots, n\}$ be an independent copy of the sequence $\{ \mtx{S}_i : i = 1, \dots, n \}$,
and let $\Expect'$ denote partial expectation with respect to the independent copy.  Then
$$
\begin{aligned}
\Expect \norm{ \sum_{i=1}^n ( \mtx{S}_i - \Expect \mtx{S}_i) }
	&= \Expect \norm{ \sum_{i=1}^n \big[ ( \mtx{S}_i - \Expect \mtx{S}_i)
	- \Expect' ( \mtx{S}'_i - \Expect \mtx{S}_i) \big] } \\
	&\leq \Expect \left[ \Expect' \norm{ \sum_{i=1}^n \big[ ( \mtx{S}_i - \Expect \mtx{S}_i)
	- ( \mtx{S}'_i - \Expect \mtx{S}_i) \big] } \right] \\
	&= \Expect \norm{ \sum_{i=1}^n (\mtx{S}_i - \mtx{S}_i') }.
\end{aligned}
$$
The first identity holds because $\Expect' \mtx{S}'_i = \Expect \mtx{S}_i$ by identical distribution.
Since the spectral norm is convex, we can apply Jensen's inequality~\eqref{eqn:jensen} conditionally
to draw out the partial expectation $\Expect'$.  Last, we combine the iterated expectation into
a single expectation.

Observe that $\mtx{S}_i - \mtx{S}'_i$ has the same distribution as its negation $\mtx{S}_i' - \mtx{S}_i$.
It follows that the independent sequence
$\{ \eps_i (\mtx{S}_i - \mtx{S}_i') : i = 1, \dots, n \}$
has the same distribution as $\{ \mtx{S}_i - \mtx{S}_i' : i = 1, \dots, n \}$.
Therefore, the expectation of any nonnegative function
takes the same value for both sequences.  In particular,
$$
\begin{aligned}
\Expect \norm{ \sum_{i=1}^n (\mtx{S}_i - \mtx{S}_i') }
	&= \Expect \norm{ \sum_{i=1}^n \eps_i (\mtx{S}_i - \mtx{S}_i') } \\
	&\leq \Expect \norm{ \sum_{i=1}^n \eps_i \mtx{S}_i } + \Expect \norm{ \sum_{i=1}^n (-\eps_i) \mtx{S}'_i } \\
	&= 2 \Expect \norm{ \sum_{i=1}^n \eps_i \mtx{S}_i }.
\end{aligned}
$$
The second step is the triangle inequality, and the last line follows from the identical distribution
of $\{ \eps_i \mtx{S}_i \}$ and $\{ - \eps_i \mtx{S}_i' \}$.  Combine the last two displays to obtain
the upper bound.

To obtain results for $r > 1$,
we pursue the same approach.  We require the additional observation that $\norm{ \cdot }^r$
is a convex function, and we also need to invoke the triangle inequality~\eqref{eqn:Lr-triangle}.
Finally, we remark that the lower bound follows from a similar procedure, so we omit
the demonstration.
\end{proof}

\section{The Expected Norm of a Matrix Rademacher Series} \label{sec:khintchine}

To prove Theorem~\ref{thm:main}, our overall strategy is to use symmetrization.
This approach allows us to reduce the study of an independent sum of random matrices
to the study of a sum of fixed matrices modulated by independent
Rademacher random variables.  This type of random matrix is
called a \term{matrix Rademacher series}.  In this section,
we establish a bound on the spectral norm of a matrix Rademacher series.
This is the key technical step in the proof of Theorem~\ref{thm:main}.

\begin{theorem}[Matrix Rademacher Series] \label{thm:matrix-rademacher}
Let $\mtx{H}_1, \dots, \mtx{H}_n$ be fixed Hermitian matrices with dimension $d$.
Let $\eps_1, \dots, \eps_n$ be independent Rademacher random variables.  Then
\begin{equation} \label{eqn:rad-result}
\left( \Expect \normsq{ \sum_{i=1}^n \eps_i \mtx{H}_i } \right)^{1/2}
	\leq \sqrt{1 + 2\lceil\log d\rceil} \cdot \norm{ \sum_{i=1}^n \mtx{H}_i^2 }^{1/2}.
\end{equation}
\end{theorem}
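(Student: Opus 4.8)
The plan is to run the moment method. Write $\mtx{X} := \sum_{i=1}^n \eps_i \mtx{H}_i$, which is Hermitian, and fix a positive integer $p$ to be optimized at the end. Since an even power $\mtx{X}^{2p}$ is positive semidefinite, I first pass from the norm to the trace,
\[
\Expect \norm{\mtx{X}}^{2p} = \Expect \norm{\mtx{X}^{2p}} \leq \Expect \trace \mtx{X}^{2p},
\]
using $\norm{\mtx{H}}^{2p} = \norm{\mtx{H}^{2p}}$ for Hermitian $\mtx{H}$ together with $\norm{\mtx{A}} \leq \trace \mtx{A}$ for positive-semidefinite $\mtx{A}$. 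So it suffices to control the trace moment $\Expect \trace \mtx{X}^{2p}$.

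The engine of the proof is the recursion $\Expect \trace \mtx{X}^{2p} \leq (2p-1) \cdot \norm{ \sum_{i=1}^n \mtx{H}_i^2 } \cdot \Expect \trace \mtx{X}^{2p-2}$ for $p \geq 1$. Granting this, iterating down to the base case $\Expect \trace \mtx{X}^{0} = \trace \Id_d = d$ gives $\Expect \trace \mtx{X}^{2p} \leq d\,(2p-1)!!\, v^{p}$, where $v := \norm{\sum_i \mtx{H}_i^2}$. Combining with the first display and applying Jensen's inequality to the concave function $t \mapsto t^{1/p}$,
\[
\Expect \norm{\mtx{X}}^2 = \Expect \bigl( \norm{\mtx{X}}^{2p} \bigr)^{1/p} \leq \bigl( \Expect \norm{\mtx{X}}^{2p} \bigr)^{1/p} \leq \bigl( d\,(2p-1)!! \bigr)^{1/p}\, v .
\]
It then remains to estimate the double factorial — cheaply $(2p-1)!! \leq p^p$ by the GM--AM inequality, or a sharper Stirling-type bound — and to take $p$ of order $\log d$, say $p = \lceil \log d \rceil$, which keeps $d^{1/p}$ under control and makes the whole prefactor at most $1 + 2\lceil \log d \rceil$; squaring yields the theorem. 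Calibrating the double-factorial estimate so that the constant is \emph{exactly} $1 + 2\lceil \log d \rceil$ is the one fussy point of this step.

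To prove the recursion, expand $\Expect \trace \mtx{X}^{2p} = \sum_i \Expect\bigl[ \eps_i \trace[\mtx{H}_i \mtx{X}^{2p-1}] \bigr]$ by linearity. For a fixed $i$, put $\mtx{Y}_i := \mtx{X} - 2\eps_i \mtx{H}_i$, i.e.\ the copy of $\mtx{X}$ obtained by reversing the $i$-th sign. Reversing $\eps_i$ is measure-preserving and leaves the fixed matrix $\mtx{H}_i$ alone, so it carries $(\eps_i, \mtx{X})$ to $(-\eps_i, \mtx{Y}_i)$; hence $\Expect[\eps_i \trace[\mtx{H}_i \mtx{X}^{2p-1}]] = -\Expect[\eps_i \trace[\mtx{H}_i \mtx{Y}_i^{2p-1}]]$, and averaging the two,
\[
\Expect\bigl[ \eps_i \trace[\mtx{H}_i \mtx{X}^{2p-1}] \bigr] = \tfrac{1}{2} \Expect\bigl[ \eps_i \trace[\mtx{H}_i (\mtx{X}^{2p-1} - \mtx{Y}_i^{2p-1})] \bigr].
\]
Telescoping $\mtx{X}^{2p-1} - \mtx{Y}_i^{2p-1} = \sum_{k=0}^{2p-2} \mtx{X}^k (\mtx{X} - \mtx{Y}_i) \mtx{Y}_i^{2p-2-k}$ with $\mtx{X} - \mtx{Y}_i = 2\eps_i \mtx{H}_i$ and $\eps_i^2 = 1$ collapses the right-hand side to $\sum_{k=0}^{2p-2} \Expect \trace[\mtx{H}_i \mtx{X}^k \mtx{H}_i \mtx{Y}_i^{2p-2-k}]$ — note that no higher-order terms survive. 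Now apply the summed form of the GM--AM trace inequality, Fact~\ref{fact:trace-gm-am}, with $r = p-1$ and the Hermitian matrices $\mtx{X}$ and $\mtx{Y}_i$: this bounds the last sum by $\tfrac{2p-1}{2} \trace[\mtx{H}_i^2 (\mtx{X}^{2p-2} + \mtx{Y}_i^{2p-2})]$. Summing over $i$ and reversing $\eps_i$ once more to replace $\Expect \trace[\mtx{H}_i^2 \mtx{Y}_i^{2p-2}]$ by $\Expect \trace[\mtx{H}_i^2 \mtx{X}^{2p-2}]$ gives $\Expect \trace \mtx{X}^{2p} \leq (2p-1)\, \Expect \trace\bigl[ \bigl(\sum_i \mtx{H}_i^2\bigr) \mtx{X}^{2p-2} \bigr]$; finally Fact~\ref{fact:trace-dual} (applicable since $\mtx{X}^{2p-2}$ is positive semidefinite) pulls out $\norm{\sum_i \mtx{H}_i^2}$, proving the recursion.

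I expect the real content to be the choice of symmetrization in the recursion step: telescoping $\mtx{X}$ against its sign-flipped companion $\mtx{Y}_i$ — rather than against the $\eps_i$-free part $\mtx{X} - \eps_i\mtx{H}_i$ — is exactly what keeps a factor $\mtx{H}_i$ on both sides, so that Fact~\ref{fact:trace-gm-am} closes the argument with no residual terms. This is the analogue, for a matrix Rademacher series, of the moment identity underlying the method of exchangeable pairs, with the ``conditional variance'' degenerating to the deterministic matrix $\sum_i \mtx{H}_i^2$. After that, the remaining work — the reduction to traces, the iteration of the recursion, and the arithmetic of the final optimization over $p$ — is routine.
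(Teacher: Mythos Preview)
Your proposal is correct and follows essentially the same route as the paper: both reduce to trace moments, derive the recursion $\Expect \trace \mtx{X}^{2p} \leq (2p-1)\,\norm{\sum_i \mtx{H}_i^2}\,\Expect \trace \mtx{X}^{2p-2}$ by pairing $\mtx{X}$ with its $i$th-sign-flipped companion (the paper writes this as conditioning on $\eps_i$ and using $\mtx{X}_{\pm i}$, you phrase it as the exchangeable pair $(\mtx{X},\mtx{Y}_i)$; these are the same computation), telescope, apply Fact~\ref{fact:trace-gm-am}, and finish with Fact~\ref{fact:trace-dual}. For the ``fussy'' constant you flagged: the paper uses the trapezoid-rule estimate $(2p-1)!! \leq \bigl((2p+1)/\econst\bigr)^p$, which with $p=\lceil\log d\rceil$ gives $d^{1/p}\cdot(2p+1)/\econst \leq 2p+1$ exactly; your AM--GM bound $(2p-1)!!\leq p^p$ yields $\econst p$ instead, which is slightly too large.
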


\noindent
The proof of Theorem~\ref{thm:matrix-rademacher} occupies the bulk
of this section, beginning with Section~\ref{sec:rad-pf}.
The argument is really just a fancy version of the familiar
calculation of the moments of a centered standard normal
random variable; see Section~\ref{sec:ibp} for details.

\subsection{Discussion}

Before we establish Theorem~\ref{thm:matrix-rademacher},
let us make a few comments.
First, it is helpful to interpret the result in the same language
we have used to state Theorem~\ref{thm:main}.  Introduce the
matrix Rademacher series
$$
\mtx{X} := \sum_{i=1}^n \eps_i \mtx{H}_i.
$$
Compute the matrix variance, defined in~\eqref{eqn:variance-param}:
$$
v(\mtx{X}) := \norm{ \Expect \mtx{X}^2 }
	= \norm{ \sum_{i,j=1}^n \Expect[ \eps_i \eps_j ] \cdot \mtx{H}_i \mtx{H}_j }
	= \norm{ \sum_{i=1}^n \mtx{H}_i^2 }.
$$
We may rewrite Theorem~\ref{thm:matrix-rademacher} as the statement that
$$
\left( \Expect \normsq{ \mtx{X} } \right)^{1/2}
	\leq \sqrt{(1 + 2 \lceil\log d\rceil) \cdot v(\mtx{X})}.
$$
In other words, Theorem~\ref{thm:matrix-rademacher} is a sharper
version of Theorem~\ref{thm:main} for the special case of a 
matrix Rademacher series.

Next, we have focused on bounding the second moment of
$\norm{\mtx{X}}$ because this is the
most natural form of the result.  Note that we also
control the first moment because of Jensen's inequality~\eqref{eqn:jensen}:
\begin{equation} \label{eqn:rad-first-moment}
\Expect \norm{ \sum_{i=1}^n \eps_i \mtx{H}_i }
	\leq \left( \Expect \normsq{ \sum_{i=1}^n \eps_i \mtx{H}_i } \right)^{1/2}
	\leq \sqrt{1 + 2\lceil\log d\rceil} \cdot \norm{ \sum_{i=1}^n \mtx{H}_i^2 }^{1/2}.
\end{equation}
A simple variant on the proof of Theorem~\ref{thm:matrix-rademacher}
provides bounds for higher moments.

Third, the dimensional factor on the right-hand side of~\eqref{eqn:rad-result}
is asymptotically sharp.  Indeed, let us write $K(d)$ for the minimum possible constant
in the inequality
$$
\left( \Expect \normsq{ \sum_{i=1}^n \eps_i \mtx{H}_i } \right)^{1/2}
	\leq K(d) \cdot \norm{ \sum_{i=1}^n \mtx{H}_i^2 }^{1/2}
	\quad\text{for $\mtx{H}_i \in \Sym_d$ and $n \in \mathbb{N}$.}
$$
The example in Section~\ref{sec:upper-var} shows that
$$
K(d) \geq \sqrt{2 \log d}.
$$
In other words,~\eqref{eqn:rad-result} cannot be improved without making further assumptions.

Theorem~\ref{thm:matrix-rademacher} is a variant on the noncommutative
Khintchine inequality, first established by Lust-Piquard~\cite{LP86:Inegalites-Khintchine}
and later improved by Pisier~\cite{Pis98:Noncommutative-Vector} and by Buchholz~\cite{Buc01:Operator-Khintchine}.
The noncommutative Khintchine inequality gives bounds for the Schatten norm of a matrix
Rademacher series, rather than for the spectral norm.
Rudelson~\cite{Rud99:Random-Vectors} pointed out that
the noncommutative Khintchine inequality also implies bounds
for the spectral norm of a matrix Rademacher series.
In our presentation, we choose to control the spectral norm directly.

\subsection{The Spectral Norm and the Trace Moments}
\label{sec:rad-pf}

To begin the proof of Theorem~\ref{thm:matrix-rademacher},
we introduce the random Hermitian matrix
\begin{equation} \label{eqn:rad-series}
\mtx{X} := \sum_{i=1}^n \eps_i \mtx{H}_i
\end{equation}
Our goal is to bound the expected spectral norm of $\mtx{X}$.
We may proceed by estimating the expected trace of a power
of the random matrix, which is known as a \term{trace moment}.
Fix a nonnegative integer $p$.  Observe that
\begin{equation} \label{eqn:norm-trace-moments}
\left( \Expect \normsq{\mtx{X}} \right)^{1/2}
	\leq \big( \Expect \norm{ \mtx{X} }^{2p} \big)^{1/(2p)}
	= \big( \Expect \norm{ \smash{\mtx{X}^{2p}} } \big)^{1/(2p)}
	\leq \left( \Expect \trace \mtx{X}^{2p} \right)^{1/(2p)}.
\end{equation}
The first identity is Jensen's inequality~\eqref{eqn:jensen},
applied to the concave function $t \mapsto t^{1/p}$.
The second relation is~\eqref{eqn:norm-power}.
The final inequality is the bound~\eqref{eqn:norm-trace-bd}
on the norm of the positive-semidefinite matrix $\mtx{X}^{2p}$ by its trace.

\begin{remark}[Higher Moments]
It should be clear that we can also bound expected powers of the
spectral norm using the same technique.  For simplicity, we omit
this development.
\end{remark}

\subsection{Summation by Parts}

To study the trace moments of the random matrix $\mtx{X}$,
we rely on a discrete analog of integration by parts.
This approach is clearer if we introduce some more notation.
For each index $i$, define the random matrices
$$
\mtx{X}_{+i} := \mtx{H}_i + \sum_{j\neq i} \eps_j \mtx{H}_j
\quad\text{and}\quad
\mtx{X}_{-i} := - \mtx{H}_i + \sum_{j\neq i} \eps_j \mtx{H}_j
$$
In other words, the distribution of $\mtx{X}_{\eps_i i}$ is the conditional distribution
of the random matrix $\mtx{X}$ given the value $\eps_i$ of the $i$th Rademacher variable.
This interpretation depends on the assumption that the Rademacher variables are independent.

Beginning with the trace moment, observe that
\begin{equation} \label{eqn:sum-parts}
\begin{aligned}
\Expect\trace \mtx{X}^{2p}
	&= \Expect \trace \big[ \mtx{X} \cdot \mtx{X}^{2p-1} \big] \\
	&= \sum_{i=1}^n \Expect \big[ \Expect_{\eps_i} \trace \big[ \eps_i \mtx{H}_i \cdot \mtx{X}^{2p-1} \big] \big] \\
	&= \frac{1}{2} \sum_{i=1}^n \Expect \trace\Big[ \mtx{H}_i \cdot \Big(\mtx{X}_{+i}^{2p-1} - \mtx{X}_{-i}^{2p-1} \Big) \Big]
\end{aligned}	
\end{equation}
In the second step, we simply write out the definition~\eqref{eqn:rad-series}
of the random matrix $\mtx{X}$ and use the linearity of the trace to draw out the sum.
Then we write the expectation as an iterated expectation.  To reach the next line,
write out the partial expectation using the notation $\mtx{X}_{\pm i}$
and the linearity of the trace.

\subsection{A Difference of Powers}

Next, let us apply an algebraic identity to reduce the difference of powers
in~\eqref{eqn:sum-parts}.  For matrices $\mtx{W}, \mtx{Y} \in \Sym_d$,
it holds that
\begin{equation} \label{eqn:diff-powers}
\mtx{W}^{2p-1} - \mtx{Y}^{2p-1} = \sum_{q=0}^{2p-2} \mtx{W}^{q} (\mtx{W} - \mtx{Y}) \mtx{Y}^{2p-2-q}.
\end{equation}
To check this expression, just expand the matrix products and notice that the sum telescopes.

Introducing the relation~\eqref{eqn:diff-powers} with $\mtx{W} = \mtx{X}_{+i}$ and $\mtx{Y} = \mtx{X}_{-i}$
into the formula~\eqref{eqn:sum-parts}, we find that
\begin{equation} \label{eqn:alt-prod}
\begin{aligned}
\Expect\trace \mtx{X}^{2p}
	&= \frac{1}{2} \sum_{i=1}^n \Expect \trace \left[ \mtx{H}_i
	\cdot \sum_{q=0}^{2p-2} \mtx{X}_{+i}^{q} \big(\mtx{X}_{+i} - \mtx{X}_{-i} \big) \mtx{X}_{-i}^{2p-2-q} \right] \\
	&= \sum_{i=1}^n \sum_{q=0}^{2p-2} \Expect \trace
	\left[ \mtx{H}_i \mtx{X}_{+i}^{q} \mtx{H}_i \mtx{X}_{-i}^{2p-2-q} \right].
\end{aligned}
\end{equation}
Linearity of the trace allows us to draw out the sum over $q$,
and we have used the observation that $\mtx{X}_{+i} - \mtx{X}_{-i} = 2 \mtx{H}_i$.

\subsection{A Bound for the Trace Moments}

We are now in a position to obtain a bound for the trace moments of $\mtx{X}$.
Beginning with~\eqref{eqn:alt-prod}, we compute that
\begin{equation} \label{eqn:post-gm-am}
\begin{aligned}
\Expect \trace \mtx{X}^{2p}
	&= \sum_{i=1}^n \sum_{q=0}^{2p-2} \Expect \trace
	\left[ \mtx{H}_i \mtx{X}_{+i}^{q} \mtx{H}_i \mtx{X}_{-i}^{2p-2-q} \right] \\
	&\leq \sum_{i=1}^n \frac{2p-1}{2} \Expect \trace \left[ \mtx{H}_i^2 \cdot
	\Big( \mtx{X}_{+i}^{2p-2} + \mtx{X}_{-i}^{2p-2} \Big) \right] \\
	&= (2p-1) \cdot \sum_{i=1}^n  \Expect \trace \left[ \mtx{H}_i^2 \cdot \big( \Expect_{\eps_i} \mtx{X}^{2p-2} \big) \right] \\
	&= (2p-1) \cdot \Expect \trace \left[ \left( \sum_{i=1}^n \mtx{H}_i^2 \right) \cdot \mtx{X}^{2p-2} \right] \\
	&\leq (2p-1) \cdot \norm{ \sum_{i=1}^n \mtx{H}_i^2 } \cdot \Expect \trace \mtx{X}^{2p-2}.
\end{aligned}
\end{equation}
The bound in the second line is the trace GM--AM inequality, Fact~\ref{fact:trace-gm-am},
with $r = p - 1$ and $\mtx{W} = \mtx{X}_{+i}$ and $\mtx{Y} = \mtx{X}_{-i}$.
To reach the third line, observe that the parenthesis in the second line
is twice the partial expectation of $\mtx{X}^{2p-2}$ with respect to $\eps_i$.
Afterward, we use linearity of the expectation and the trace to draw in the sum over $i$,
and then we combine the expectations.
Last, invoke the trace inequality from Fact~\ref{fact:trace-dual}.

\subsection{Iteration and the Spectral Norm Bound}

The expression~\eqref{eqn:post-gm-am} shows that the trace moment
is controlled by a trace moment with a smaller power:
\begin{equation*} \label{eqn:iteration}
\Expect \trace \mtx{X}^{2p}
	\leq (2p-1) \cdot \norm{ \sum_{i=1}^n \mtx{H}_i^2 } \cdot \Expect \trace\mtx{X}^{2p-2}.
\end{equation*}
Iterating this bound $p$ times, we arrive at the result
\begin{equation} \label{eqn:iterated}
\begin{aligned}
\Expect \trace \mtx{X}^{2p}
	&\leq (2p-1)!! \cdot \norm{ \sum_{i=1}^n \mtx{H}_i^2 }^p \cdot \trace \mtx{X}^0 \\
	&= d \cdot (2p-1)!! \cdot \norm{ \sum_{i=1}^n \mtx{H}_i^2 }^p.
\end{aligned}
\end{equation}
The double factorial is defined as $(2p-1)!! := (2p-1) (2p-3)(2p-5) \cdots (5)(3)(1)$.

The expression~\eqref{eqn:norm-trace-moments}
shows that we can control the expected spectral
norm of $\mtx{X}$ by means of a trace moment.
Therefore, for any nonnegative integer $p$, it holds that
\begin{equation} \label{eqn:rad-norm-bd}
\Expect \norm{\mtx{X}}
	\leq \big( \Expect \trace \mtx{X}^{2p} \big)^{1/(2p)}
	\leq \big( d \cdot (2p-1)!! \big)^{1/(2p)} \cdot \norm{ \sum_{i=1}^n \mtx{H}_i^2 }^{1/2}.
\end{equation}
The second inequality is simply our bound~\eqref{eqn:iterated}.
All that remains is to choose the value of $p$ to minimize
the factor on the right-hand side.

\subsection{Calculating the Constant}

Finally, let us develop an accurate bound for the leading factor on the right-hand side of~\eqref{eqn:rad-norm-bd}.
We claim that
\begin{equation} \label{eqn:doublefactorial-claim}
(2p-1)!! \leq \left( \frac{2p+1}{\econst} \right)^{p}.
\end{equation}
Given this estimate, select $p = \lceil \log d \rceil$ to reach
\begin{equation} \label{eqn:rad-const-bd}
\big( d \cdot (2p-1)!! \big)^{1/(2p)}
	\leq d^{1/(2p)} \sqrt{\frac{2p+1}{\econst}}
	\leq \sqrt{2p + 1}
	= \sqrt{1 + 2 \lceil \log d \rceil}.
\end{equation}
Introduce the inequality~\eqref{eqn:rad-const-bd} into~\eqref{eqn:rad-norm-bd}
to complete the proof of Theorem~\ref{thm:matrix-rademacher}.

To check that~\eqref{eqn:doublefactorial-claim} is valid,
we use some tools from integral calculus:
$$
\begin{aligned}
\log\big( (2p-1)!! \big) &= \sum_{i=1}^{p-1} \log(2i + 1) \\
	&= \left[ \frac{1}{2} \log(2 \cdot 0 + 1) + \sum_{i=1}^{p-1} \log(2i + 1) + \frac{1}{2} \log(2p + 1) \right]
	- \frac{1}{2} \log(2p+1) \\
	&\leq \int_0^p \log(2x + 1) \idiff{x} - \frac{1}{2} \log(2p+1) \\
	&= p \log(2p+1) - p. 
\end{aligned}
$$
The bracket in the second line is the trapezoid rule approximation of the integral in the third line.
Since the integrand is concave, the trapezoid rule underestimates the integral.
Exponentiating this formula, we arrive at~\eqref{eqn:doublefactorial-claim}.

\subsection{Context} \label{sec:ibp}

The proof of Theorem~\ref{thm:matrix-rademacher}
is really just a discrete, matrix version of the familiar calculation of the
$(2p)$th moment of a centered normal random variable.
Let us elaborate.
Recall the Gaussian integration by parts formula:
\begin{equation} \label{eqn:gauss-ibp}
\Expect[ \gamma \cdot f(\gamma) ] = \sigma^2 \cdot \Expect[ f'(\gamma) ]
\end{equation}
where $\gamma \sim \normal(0, \sigma^2)$ and $f : \R \to \R$ is any function
for which the integrals are finite.  This result follows when we write
the expectations as integrals with respect to the normal density
tand invoke the usual integration by parts rule.
Now, suppose that we wish to compute
the $(2p)$th moment of $\gamma$.  We have
\begin{equation} \label{eqn:gauss-2p}
\Expect \gamma^{2p} = \Expect\big[ \gamma \cdot \gamma^{2p-1} \big]
	= (2p-1) \cdot \sigma^2 \cdot \Expect \gamma^{2p-2}.
\end{equation}
The second identity is just~\eqref{eqn:gauss-ibp} with the choice $f(t) = t^{2p-1}$.
Iterating~\eqref{eqn:gauss-2p}, we discover that
$$
\Expect \gamma^{2p} = (2p-1)!! \cdot \sigma^{2p}.
$$
In Theorem~\ref{thm:matrix-rademacher},
the matrix variance parameter $v(\mtx{X})$ plays
the role of the scalar variance $\sigma^2$.

In fact, the link with Gaussian integration by parts is even stronger.
Consider a matrix Gaussian series
$$
\mtx{Y} := \sum_{i=1}^n \gamma_i \mtx{H}_i
$$
where $\{ \gamma_i \}$ is an independent family of standard normal variables.
If we replace the discrete integration by parts in the proof of Theorem~\ref{thm:matrix-rademacher}
with Gaussian integration by parts, the argument leads to the bound
$$
\left( \Expect \normsq{ \sum_{i=1}^n \gamma_i \mtx{H}_i } \right)^{1/2}
	\leq \sqrt{1+ 2\lceil \log d \rceil} \cdot \norm{ \sum_{i=1}^n \mtx{H}_i^2 }^{1/2}.
$$
This approach requires matrix calculus, but it is slightly simpler
than the argument for matrix Rademacher series in other respects.
See~\cite[Thm.~8.1]{Tro15:Second-Order-Matrix} for a proof of
the noncommutative Khintchine inequality for Gaussian series
along these lines.

\section{Upper Bounds for the Expected Norm} \label{sec:upper}

We are now prepared to establish the upper bound for an arbitrary sum of independent random matrices.
The argument is based on the specialized result, Theorem~\ref{thm:matrix-rademacher},
for matrix Rademacher series.  It proceeds by steps through more and more general classes of
random matrices: first positive semidefinite, then Hermitian, and finally rectangular.
Here is what we will show.

\begin{theorem}[Expected Norm: Upper Bounds] \label{thm:upper}
Define the dimensional constant $C(d) := 4(1 + 2\lceil \log d \rceil)$.
The expected spectral norm of a sum of independent random matrices satisfies the following upper bounds.

\begin{enumerate}
\item	\textbf{The Positive-Semidefinite Case.}
Consider an independent family $\{ \mtx{T}_1, \dots, \mtx{T}_n \}$ of random $d \times d$ positive-semidefinite
matrices, and define the sum
$$
\mtx{W} := \sum_{i=1}^n \mtx{T}_i.
$$
Then \begin{equation} \label{eqn:psd-case}
\Expect \norm{ \mtx{W} }
	\leq \left[ \norm{ \Expect \mtx{W} }^{1/2} +
	\sqrt{C(d)} \cdot \big( \Expect \max\nolimits_i \norm{ \mtx{T}_i } \big)^{1/2} \right]^2.
\end{equation}

\item	\textbf{The Centered Hermitian Case.}
Consider an independent family $\{ \mtx{Y}_1, \dots, \mtx{Y}_n \}$ of random $d \times d$ Hermitian
matrices with $\Expect \mtx{Y}_i = \mtx{0}$ for each index $i$, and define the sum
$$
\mtx{X} := \sum_{i=1}^n \mtx{Y}_i.
$$
Then \begin{equation} \label{eqn:herm-case}
\left( \Expect \normsq{ \mtx{X} } \right)^{1/2}
	\leq \sqrt{C(d)} \cdot \norm{ \Expect \mtx{X}^2 }^{1/2}
	+ C(d) \cdot \left( \Expect \max\nolimits_i \norm{ \mtx{Y}_i }^2 \right)^{1/2}.
\end{equation}

\item	\textbf{The Centered Rectangular Case.}
Consider an independent family $\{ \mtx{S}_1, \dots, \mtx{S}_n \}$ of random $d_1 \times d_2$
matrices with $\Expect \mtx{S}_i = \mtx{0}$ for each index $i$, and define the sum
$$
\mtx{Z} := \sum_{i=1}^n \mtx{S}_i.
$$
Then \begin{equation} \label{eqn:rect-case}
\Expect \norm{ \mtx{Z} }
	\leq \sqrt{C(d)} \cdot \max\left\{ \norm{ \Expect\big[ \mtx{ZZ}^\adj \big] }^{1/2}, \
		\norm{ \Expect\big[ \mtx{Z}^\adj \mtx{Z} \big] }^{1/2} \right\}
		+ C(d) \cdot \left( \Expect \max\nolimits_i \norm{ \mtx{S}_i }^2 \right)^{1/2} 
\end{equation}
where $d := d_1 + d_2$.
\end{enumerate}
\end{theorem}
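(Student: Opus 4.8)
The plan is to establish the three parts in the order stated, each feeding into the next: part~(1) is the real workhorse; part~(2) follows by applying part~(1) to the squared summands; and part~(3) follows by passing to the Hermitian dilation. For part~(1), I would split $\mtx{W} = \Expect\mtx{W} + (\mtx{W} - \Expect\mtx{W})$, use the triangle inequality for the spectral norm, and observe that the centered remainder $\sum_i(\mtx{T}_i - \Expect\mtx{T}_i)$ is set up for symmetrization. Fact~\ref{fact:symmetrization} with $r = 1$ bounds $\Expect\norm{\mtx{W} - \Expect\mtx{W}}$ by $2\,\Expect\norm{\sum_i \eps_i \mtx{T}_i}$; conditioning on the $\mtx{T}_i$ and applying the first-moment matrix Rademacher bound~\eqref{eqn:rad-first-moment} followed by the sum-of-squares estimate Fact~\ref{fact:sum-squares} gives $\norm{\sum_i \mtx{T}_i^2} \leq (\max_i\norm{\mtx{T}_i})\cdot\norm{\mtx{W}}$. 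Taking a square root and applying Cauchy--Schwarz~\eqref{eqn:expect-cs} to the product leaves an inequality of the shape $E \leq A + B\sqrt{E}$ with $E := \Expect\norm{\mtx{W}}$, $A := \norm{\Expect\mtx{W}}$, and $B := \sqrt{C(d)}\cdot(\Expect\max_i\norm{\mtx{T}_i})^{1/2}$; the constant matches because $2\sqrt{1+2\lceil\log d\rceil} = \sqrt{C(d)}$, so the factor $2$ from symmetrization and the $\sqrt{1+2\lceil\log d\rceil}$ from Theorem~\ref{thm:matrix-rademacher} combine exactly. Since the summands are bounded, $E$ is finite, and the elementary fact that $E \leq A + B\sqrt{E}$ forces $\sqrt{E} \leq \sqrt{A} + B$ yields~\eqref{eqn:psd-case}.

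For part~(2), I would symmetrize $\mtx{X} = \sum_i \mtx{Y}_i$ using Fact~\ref{fact:symmetrization} with $r = 2$ and then apply the second-moment bound~\eqref{eqn:rad-result} conditionally on the $\mtx{Y}_i$, which gives $(\Expect\normsq{\mtx{X}})^{1/2} \leq \sqrt{C(d)}\cdot(\Expect\norm{\sum_i \mtx{Y}_i^2})^{1/2}$, once more absorbing the symmetrization factor into $\sqrt{C(d)}$. Now $\{\mtx{Y}_1^2,\dots,\mtx{Y}_n^2\}$ is an independent family of positive-semidefinite matrices, so part~(1) bounds $\Expect\norm{\sum_i \mtx{Y}_i^2}$; here I would also use $\Expect[\sum_i \mtx{Y}_i^2] = \Expect\mtx{X}^2$ (the cross terms $\Expect[\mtx{Y}_i \mtx{Y}_j]$ with $i \neq j$ vanish by independence and centering) and $\norm{\mtx{Y}_i^2} = \norm{\mtx{Y}_i}^2$. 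Substituting into~\eqref{eqn:psd-case}, taking a square root, and multiplying by $\sqrt{C(d)}$ produces~\eqref{eqn:herm-case}.

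For part~(3), I would set $\mtx{Y}_i := \coll{H}(\mtx{S}_i)$, an independent family of Hermitian matrices of dimension $d = d_1 + d_2$ with $\Expect\mtx{Y}_i = \coll{H}(\Expect\mtx{S}_i) = \mtx{0}$ and $\sum_i \mtx{Y}_i = \coll{H}(\mtx{Z})$, by real linearity of the dilation. Applying part~(2) to this family and translating back via~\eqref{eqn:dilation-square} and~\eqref{eqn:dilation-norm} --- using $\norm{\coll{H}(\mtx{Z})} = \norm{\mtx{Z}}$, $\norm{\coll{H}(\mtx{S}_i)} = \norm{\mtx{S}_i}$, and the fact that $\coll{H}(\mtx{Z})^2$ is block-diagonal with blocks $\mtx{ZZ}^\adj$ and $\mtx{Z}^\adj\mtx{Z}$, so $\norm{\Expect[\coll{H}(\mtx{Z})^2]} = \max\{\norm{\Expect[\mtx{ZZ}^\adj]},\ \norm{\Expect[\mtx{Z}^\adj\mtx{Z}]}\}$ --- together with Jensen's inequality~\eqref{eqn:jensen} to pass from $(\Expect\normsq{\mtx{Z}})^{1/2}$ down to $\Expect\norm{\mtx{Z}}$, gives~\eqref{eqn:rect-case}.

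The one step here that is not routine bookkeeping is applying the positive-semidefinite bound of part~(1) to the \emph{squares} $\mtx{Y}_i^2$ inside part~(2); this is what makes the two dimensional factors in~\eqref{eqn:herm-case}, one under a square root and one not, emerge with the correct powers, and it is where I would spend the most care. A minor caveat is that the quadratic-inequality step closing part~(1) needs $\Expect\norm{\mtx{W}}$ to be finite in advance, which is guaranteed here by the standing assumption that the random matrices are bounded.
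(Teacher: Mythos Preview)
Your proposal is correct and follows essentially the same route as the paper: Part~(1) via triangle inequality, symmetrization, the conditional Rademacher bound, Fact~\ref{fact:sum-squares}, Cauchy--Schwarz, and the quadratic inequality $E \leq A + B\sqrt{E} \Rightarrow \sqrt{E} \leq \sqrt{A} + B$; Part~(2) by symmetrizing with $r=2$, applying Theorem~\ref{thm:matrix-rademacher} conditionally, and feeding $\mtx{T}_i = \mtx{Y}_i^2$ into Part~(1); Part~(3) by dilation and Part~(2). The only cosmetic difference is that you make the Jensen step $\Expect\norm{\mtx{Z}} \leq (\Expect\normsq{\mtx{Z}})^{1/2}$ explicit in Part~(3), which the paper leaves implicit.
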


\noindent
The proof of Theorem~\ref{thm:upper} takes up the rest of this section.
The presentation includes notes about the provenance of various parts of the argument.

The upper bound in Theorem~\ref{thm:main} follows instantly from Case (3) of Theorem~\ref{thm:upper}.
We just introduce the notation $v(\mtx{Z})$ for the variance parameter,
and we calculate that
$$
\Expect\big[ \mtx{ZZ}^\adj \big]
	= \sum_{i,j=1}^n \Expect\big[ \mtx{S}_i \mtx{S}_j^\adj \big]
	= \sum_{i=1}^n \Expect\big[ \mtx{S}_i \mtx{S}_i^\adj \big].
$$
The first expression follows immediately from the definition of $\mtx{Z}$
and the linearity of the expectation; the second identity holds because
the random matrices $\mtx{S}_i$ are independent and have mean zero.
The formula for $\Expect\big[ \mtx{Z}^\adj \mtx{Z} \big]$
is valid for precisely the same reasons.

\subsection{Proof of the Positive-Semidefinite Case}

Recall that $\mtx{W}$ is a random $d \times d$ positive-semidefinite matrix of the form
$$
\mtx{W} := \sum_{i=1}^n \mtx{T}_i
\quad\text{where the $\mtx{T}_i$ are positive semidefinite.}
$$
Let us introduce notation for the quantity of interest:
$$
E := \Expect \norm{ \mtx{W} } = \Expect \norm{ \sum_{i=1}^n \mtx{T}_i }
$$
By the triangle inequality for the spectral norm,
$$
\begin{aligned}
E \leq \norm{ \sum_{i=1}^n \Expect \mtx{T}_i } + \Expect \norm{ \sum_{i=1}^n (\mtx{T}_i - \Expect \mtx{T}_i) }
	\leq \norm{ \sum_{i=1}^n \Expect \mtx{T}_i } + 2 \Expect \norm{ \sum_{i=1}^n \eps_i \mtx{T}_i }.
\end{aligned}
$$
The second inequality follows from symmetrization, Fact~\ref{fact:symmetrization}.
In this expression, $\{\eps_i\}$ is an independent family of Rademacher random variables,
independent from $\{ \mtx{T}_i \}$.
Conditioning on the choice of the random matrices $\mtx{T}_i$,
we apply Theorem~\ref{thm:matrix-rademacher}
via the bound~\eqref{eqn:rad-first-moment}:
$$
\begin{aligned}
\Expect \norm{ \sum_{i=1}^n \eps_i \mtx{T}_i }
	= \Expect\left[ \Expect_{\vct{\eps}} \norm{ \sum_{i=1}^n \eps_i \mtx{T}_i } \right]
	\leq \sqrt{1 + 2 \lceil\log d\rceil} \cdot \Expect\left[ \norm{ \sum_{i=1}^n \mtx{T}_i^2 }^{1/2} \right].
\end{aligned}
$$
The operator $\Expect_{\vct{\eps}}$ averages over the choice of the Rademacher random variables,
with the matrices $\mtx{T}_i$ fixed.
Now, since the matrices $\mtx{T}_i$ are positive-semidefinite,
$$
\begin{aligned}
\Expect\left[ \norm{ \sum_{i=1}^n \mtx{T}_i^2 }^{1/2} \right]
	&\leq \Expect \left[ \big(\max\nolimits_i \norm{ \mtx{T}_i } \big)^{1/2} \cdot \norm{ \sum_{i=1}^n \mtx{T}_i }^{1/2} \right] \\
	&\leq \big( \Expect \max\nolimits_i \norm{\mtx{T}_i} \big)^{1/2} \cdot
		\Big( \Expect \norm{ \sum_{i=1}^n \mtx{T}_i } \Big)^{1/2} \\
	&= \big( \Expect \max\nolimits_i \norm{\mtx{T}_i} \big)^{1/2} \cdot E^{1/2}.
\end{aligned}
$$
The first inequality is Fact~\ref{fact:sum-squares},
and the second is the Cauchy--Schwarz inequality~\eqref{eqn:expect-cs} for expectation.
In the last step, we identified a copy of the quantity $E$.

Combine the last three displays to see that
\begin{equation} \label{eqn:E-bd}
E \leq \norm{ \sum_{i=1}^n \Expect \mtx{T}_i }
	+ \sqrt{4(1 + 2\lceil\log d\rceil)} \cdot \big( \Expect \max\nolimits_i \norm{\mtx{T}_i} \big)^{1/2} \cdot E^{1/2}.
\end{equation}
For any $\alpha, \beta \geq 0$, the quadratic inequality $t^2 \leq \alpha + \beta t$ implies that
$$
t \leq \frac{1}{2} \left[ \beta + \sqrt{ \beta^2 + 4 \alpha } \right]
	\leq \frac{1}{2} \left[ \beta + \beta + 2 \sqrt{\alpha} \right]
	= \sqrt{\alpha} + \beta
$$
because the square root is subadditive.  Applying this fact to the quadratic relation~\eqref{eqn:E-bd}
for $E^{1/2}$, we obtain
$$
E^{1/2} \leq \norm{ \sum_{i=1}^n \Expect \mtx{T}_i }^{1/2}
	+ \sqrt{4(1+ 2\lceil \log d\rceil)} \cdot \big( \Expect \max\nolimits_i \norm{\mtx{T}_i} \big)^{1/2}.
$$
Square both sides to reach the conclusion~\eqref{eqn:psd-case}.

This argument is adapted from Rudelson's paper~\cite{Rud99:Random-Vectors},
which develops a version of this result for the case where the matrices
$\mtx{T}_i$ have rank one; see also~\cite{RV07:Sampling-Large}.
The paper~\cite{Tro08:Conditioning-Random} contains the first estimates
for the constants.  Magen \& Zouzias~\cite{MZ11:Low-Rank-Matrix-Valued}
observed that similar considerations apply when the matrices $\mtx{T}_i$ have higher
rank.  The complete result~\eqref{eqn:psd-case} first appeared
in~\cite[App.]{CGT12:Masked-Sample}.  The constants in this paper
are marginally better.
Related bounds for Schatten norms appear in~\cite[Sec.~7]{MJCFT14:Matrix-Concentration}
and in~\cite{JZ13:Noncommutative-Bennett}.

The results described in the last paragraph are all matrix versions of the classical inequalities
due to Rosenthal~\cite[Lem.~1]{Ros70:Subspaces-Lp}.  These bounds can be interpreted as polynomial 
moment versions of the Chernoff inequality.

\subsection{Proof of the Hermitian Case}

The result~\eqref{eqn:herm-case} for Hermitian matrices is a corollary
of Theorem~\ref{thm:matrix-rademacher} and the positive-semidefinite result~\eqref{eqn:psd-case}.
Recall that $\mtx{X}$ is a $d \times d$ random Hermitian matrix of the form
$$
\mtx{X} := \sum_{i=1}^n \mtx{Y}_i
\quad\text{where $\Expect \mtx{Y}_i = \mtx{0}$.}
$$
We may calculate that
$$
\begin{aligned}
\left( \Expect \normsq{ \mtx{X} } \right)^{1/2}
	&= \left( \Expect \normsq{ \sum_{i=1}^n \mtx{Y}_i } \right)^{1/2} \\
	&\leq 2 \left( \Expect \left[ \Expect_{\vct{\eps}} \normsq{ \sum_{i=1}^n \eps_i \mtx{Y}_i } \right] \right)^{1/2} \\
	&\leq \sqrt{4(1 + 2\lceil\log d \rceil)} \cdot \Big( \Expect \norm{ \sum_{i=1}^n \mtx{Y}_i^2 } \Big)^{1/2}.
\end{aligned}
$$
The first inequality follows from the symmetrization procedure, Fact~\ref{fact:symmetrization}.
The second inequality applies Theorem~\ref{thm:matrix-rademacher}, conditional
on the choice of $\mtx{Y}_i$.  
The remaining expectation contains a sum of independent positive-semidefinite matrices.  
Therefore, we may invoke~\eqref{eqn:psd-case} with $\mtx{T}_i = \mtx{Y}_i^2$.  We obtain
$$
\Expect \norm{ \sum_{i=1}^n \mtx{Y}_i^2 }
	\leq \left[ \norm{ \sum_{i=1}^n \Expect \mtx{Y}_i^2 }^{1/2}
	+ \sqrt{4(1 + 2\lceil \log d \rceil)} \cdot \big( \Expect \max\nolimits_i \norm{ \smash{\mtx{Y}_i^2} } \big)^{1/2} \right]^2.
$$
Combine the last two displays to reach
$$
\left( \Expect \normsq{ \mtx{X} } \right)^{1/2}
	\leq \sqrt{4(1 + 2\lceil\log d \rceil)} \cdot \left[
	\norm{ \sum_{i=1}^n \Expect \mtx{Y}_i^2 }^{1/2}
	+ \sqrt{4(1 + 2\lceil \log d \rceil)} \cdot
	\big( \Expect \max\nolimits_i \normsq{\mtx{Y}_i} \big)^{1/2} \right].
$$
Rewrite this expression to reach~\eqref{eqn:herm-case}.

A version of the result~\eqref{eqn:herm-case} first appeared in~\cite[App.]{CGT12:Masked-Sample};
the constants here are marginally better.  Related results for the
Schatten norm appear in the
papers~\cite{JX03:Noncommutative-Burkholder-I,JX08:Noncommutative-Burkholder-II,MJCFT14:Matrix-Concentration,JZ13:Noncommutative-Bennett}.
These bounds are matrix extensions of the scalar inequalities
due to Rosenthal~\cite[Thm.~3]{Ros70:Subspaces-Lp} and
to Ros{\'e}n~\cite[Thm.~1]{Ros70:Bounds-Central}; see also Nagaev--Pinelis~\cite[Thm.~2]{NP77:Some-Inequalities}.
They can be interpreted as the polynomial moment inequalities
that sharpen the Bernstein inequality.

\subsection{Proof of the Rectangular Case}
\label{sec:rect-case-upper}

Finally, we establish the rectangular result~\eqref{eqn:rect-case}.
Recall that $\mtx{Z}$ is a $d_1 \times d_2$ random rectangular matrix of the form
$$
\mtx{Z} := \sum_{i=1}^n \mtx{S}_i
\quad\text{where $\Expect \mtx{S}_i = \mtx{0}$.}
$$
Set $d := d_1 + d_2$, and form a random $d \times d$ Hermitian
matrix $\mtx{X}$ by dilating $\mtx{Z}$:
$$
\mtx{X} := \coll{H}(\mtx{Z}) = \sum_{i=1}^n \coll{H}(\mtx{S}_i).
$$
The Hermitian dilation $\coll{H}$ is defined in~\eqref{eqn:dilation};
the second relation holds because the dilation is a real-linear map.

Evidently, the random matrix $\mtx{X}$ is a sum of independent, centered, random
Hermitian matrices $\coll{H}(\mtx{S}_i)$.  Therefore, we may apply~\eqref{eqn:herm-case}
to $\mtx{X}$ to see that
\begin{equation} \label{eqn:rect-dilated}
\left( \Expect \normsq{\coll{H}(\mtx{Z})} \right)^{1/2}
	\leq \sqrt{4(1+2\lceil\log d\rceil)} \cdot \norm{ \Expect \big[  \coll{H}(\mtx{Z})^2 \big] }^{1/2}
	+ 4(1 + 2\lceil \log d \rceil) \cdot \big( \Expect \max\nolimits_i \normsq{ \coll{H}(\mtx{S}_i) } \big)^{1/2}.
\end{equation}
Since the dilation preserves norms~\eqref{eqn:dilation-norm}, the left-hand side
of~\eqref{eqn:rect-dilated} is exactly what we want:
$$
\left( \Expect \normsq{\coll{H}(\mtx{Z})} \right)^{1/2}
	= \left( \Expect \normsq{\mtx{Z}} \right)^{1/2}.
$$ 
To simplify the first term on the right-hand side of~\eqref{eqn:rect-dilated},
invoke the formula~\eqref{eqn:dilation-square} for the square of the dilation:
\begin{equation} \label{eqn:dilation-Z-square}
\norm{ \Expect\big[ \coll{H}(\mtx{Z})^2 \big] }
	= \norm{ \begin{bmatrix} \Expect\big[ \mtx{ZZ}^\adj \big] & \mtx{0} \\
	\mtx{0} & \Expect\big[ \mtx{Z}^\adj \mtx{Z} \big] \end{bmatrix} }
	= \max\left\{ \norm{ \Expect \big[ \mtx{ZZ}^\adj \big] }, \
	\norm{ \Expect \big[ \mtx{Z}^\adj \mtx{Z} \big] } \right\}.
\end{equation}
The second identity relies on the fact that the norm of a block-diagonal matrix
is the maximum norm of a diagonal block.
To simplify the second term on the right-hand side of~\eqref{eqn:rect-dilated},
we use~\eqref{eqn:dilation-norm} again:
$$
\norm{ \coll{H}(\mtx{S}_i) } = \norm{ \mtx{S}_i }.
$$
Introduce the last three displays into~\eqref{eqn:rect-dilated} to arrive at the result~\eqref{eqn:rect-case}.

The result~\eqref{eqn:rect-case} first appeared in the monograph~\cite[Eqn.~(6.16)]{Tro15:Introduction-Matrix}
with (possibly) incorrect constants.  The current paper contains the first complete presentation of the bound.

\section{Lower Bounds for the Expected Norm} \label{sec:lower}

Finally, let us demonstrate that each of the upper bounds in Theorem~\ref{thm:upper} is sharp
up to the dimensional constant $C(d)$.  The following result gives matching lower bounds
in each of the three cases.

\begin{theorem}[Expected Norm: Lower Bounds] \label{thm:lower}
The expected spectral norm of a sum of independent random matrices satisfies the following lower bounds.

\begin{enumerate}
\item	\textbf{The Positive-Semidefinite Case.}
Consider an independent family $\{ \mtx{T}_1, \dots, \mtx{T}_n \}$ of random $d \times d$ positive-semidefinite
matrices, and define the sum
$$
\mtx{W} := \sum_{i=1}^n \mtx{T}_i.
$$
Then \begin{equation} \label{eqn:psd-case-lower}
\Expect \norm{ \mtx{W} }
	\geq \frac{1}{4} \left[ \norm{ \Expect \mtx{W} }^{1/2} + \big(\Expect \max\nolimits_i \norm{ \mtx{T}_i } \big)^{1/2} \right]^2.
\end{equation}

\item	\textbf{The Centered Hermitian Case.}
Consider an independent family $\{ \mtx{Y}_1, \dots, \mtx{Y}_n \}$ of random $d \times d$ Hermitian
matrices with $\Expect \mtx{Y}_i = \mtx{0}$ for each index $i$, and define the sum
$$
\mtx{X} := \sum_{i=1}^n \mtx{Y}_i.
$$

Then \begin{equation} \label{eqn:herm-case-lower}
\left( \Expect \normsq{ \mtx{X} } \right)^{1/2}
	\geq \frac{1}{2} \norm{ \Expect \mtx{X}^2 }^{1/2}
	+ \frac{1}{4} \left( \Expect \max\nolimits_i \norm{ \mtx{Y}_i }^2 \right)^{1/2}.
\end{equation}

\item	\textbf{The Centered Rectangular Case.}
Consider an independent family $\{ \mtx{S}_1, \dots, \mtx{S}_n \}$ of random $d_1 \times d_2$
matrices with $\Expect \mtx{S}_i = \mtx{0}$ for each index $i$, and define the sum
$$
\mtx{Z} := \sum_{i=1}^n \mtx{S}_i.
$$
Then \begin{equation} \label{eqn:rect-case-lower}
\Expect \norm{ \mtx{Z} }
	\geq \frac{1}{2} \max\left\{ \norm{ \Expect\big[ \mtx{ZZ}^\adj \big] }^{1/2}, \
		\norm{ \Expect\big[ \mtx{Z}^\adj \mtx{Z} \big] } ^{1/2}\right\}
		+ \frac{1}{4} \left( \Expect \max\nolimits_i \norm{ \mtx{S}_i }^2 \right)^{1/2}.
\end{equation}
\end{enumerate}
\end{theorem}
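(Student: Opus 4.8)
The plan is to extract, in each of the three cases, two separate one-sided estimates — a \emph{variance} lower bound coming from Jensen's inequality, and a \emph{large-deviation} lower bound coming from symmetrization (Fact~\ref{fact:symmetrization}) followed by a short reflection argument — and then to combine them by a one-line arithmetic step. The positive-semidefinite case is the simplest, and needs no symmetrization. Abbreviate $E := \Expect\norm{\mtx{W}}$. Since $\lambda_{\max}$ is convex on $\Sym_d$ (a supremum of linear functionals, by~\eqref{eqn:rayleigh}), Jensen's inequality~\eqref{eqn:jensen} together with~\eqref{eqn:norm-psd} gives $\norm{\Expect\mtx{W}} = \lambda_{\max}(\Expect\mtx{W}) \le \Expect\lambda_{\max}(\mtx{W}) = E$. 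Also $\mtx{T}_i \psdle \mtx{W}$ for each $i$, because $\mtx{W} - \mtx{T}_i = \sum_{j\ne i}\mtx{T}_j$ is positive semidefinite, so Fact~\ref{fact:weyl} and~\eqref{eqn:norm-psd} give $\norm{\mtx{T}_i} \le \norm{\mtx{W}}$ and hence $\Expect\max_i\norm{\mtx{T}_i} \le E$. Substituting both inequalities into the right-hand side of~\eqref{eqn:psd-case-lower} and using $\big(\sqrt{E}+\sqrt{E}\,\big)^2 = 4E$ finishes this case.

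For the centered Hermitian case, the variance bound comes from $\normsq{\mtx{X}} = \norm{\mtx{X}^2}$ (this is~\eqref{eqn:norm-power} with $p=1$) and Jensen's inequality for the convex spectral norm: $\Expect\normsq{\mtx{X}} = \Expect\norm{\mtx{X}^2} \ge \norm{\Expect\mtx{X}^2}$. For the large-deviation bound, I would first invoke symmetrization, Fact~\ref{fact:symmetrization} with $r=2$, to pass to the matrix Rademacher series $\widetilde{\mtx{X}} := \sum_i \eps_i \mtx{Y}_i$, so that $\big(\Expect\normsq{\mtx{X}}\big)^{1/2} \ge \tfrac12\big(\Expect\normsq{\widetilde{\mtx{X}}}\big)^{1/2}$. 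Then I would pick an index $J$, depending only on $\mtx{Y}_1,\dots,\mtx{Y}_n$, with $\norm{\mtx{Y}_J} = \max_i\norm{\mtx{Y}_i}$, and condition on the matrices together with the signs $\eps_j$ for $j\ne J$; given this information $\mtx{Y}_J$ and $\mtx{C} := \sum_{j\ne J}\eps_j\mtx{Y}_j$ are fixed while $\eps_J$ is still an unbiased sign, so
\begin{equation*}
\Expect_{\eps_J}\normsq{\widetilde{\mtx{X}}}
	= \tfrac12\normsq{\mtx{Y}_J + \mtx{C}} + \tfrac12\normsq{\mtx{Y}_J - \mtx{C}}
	\ \ge\ \tfrac14\big(\norm{\mtx{Y}_J + \mtx{C}} + \norm{\mtx{Y}_J - \mtx{C}}\big)^2
	\ \ge\ \normsq{\mtx{Y}_J},
\end{equation*}
using $2(x^2+y^2) \ge (x+y)^2$ and then the triangle inequality $\norm{\mtx{Y}_J + \mtx{C}} + \norm{\mtx{Y}_J - \mtx{C}} \ge 2\norm{\mtx{Y}_J}$. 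Taking total expectations gives $\Expect\normsq{\widetilde{\mtx{X}}} \ge \Expect\max_i\normsq{\mtx{Y}_i}$, hence $\big(\Expect\normsq{\mtx{X}}\big)^{1/2} \ge \tfrac12\big(\Expect\max_i\normsq{\mtx{Y}_i}\big)^{1/2}$. Writing $a$, $V$, $L$ for the three quantities in~\eqref{eqn:herm-case-lower}, the two bounds say $a\ge V$ and $a\ge\tfrac12 L$, so $\tfrac12 V + \tfrac14 L \le \tfrac12 a + \tfrac14(2a) = a$, which is~\eqref{eqn:herm-case-lower}.

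The centered rectangular case then follows from the Hermitian case applied to the Hermitian dilation $\mtx{X} := \coll{H}(\mtx{Z}) = \sum_i \coll{H}(\mtx{S}_i)$, exactly as in Section~\ref{sec:rect-case-upper}: the matrices $\coll{H}(\mtx{S}_i)$ are independent, Hermitian, and centered; the identity~\eqref{eqn:dilation-norm} recovers the left-hand side in terms of $\mtx{Z}$; the square formula~\eqref{eqn:dilation-square} turns $\norm{\Expect\mtx{X}^2}$ into $\max\{\norm{\Expect[\mtx{ZZ}^\adj]},\,\norm{\Expect[\mtx{Z}^\adj\mtx{Z}]}\}$; and~\eqref{eqn:dilation-norm} again turns $\norm{\coll{H}(\mtx{S}_i)}$ into $\norm{\mtx{S}_i}$. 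I do not expect a serious obstacle anywhere, since every step is short; the one place that requires care is the reflection step, where one must verify that the argmax index $J$ is a function of the matrices $\mtx{Y}_i$ alone, so that conditioning on those matrices and on $\{\eps_j : j\ne J\}$ leaves $\eps_J$ uniformly distributed on $\{\pm 1\}$. Once that is granted, the rest is the scalar inequality $2(x^2+y^2)\ge(x+y)^2$ and the triangle inequality for the spectral norm.
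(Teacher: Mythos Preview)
Your proposal is correct and follows essentially the same architecture as the paper: in each case you obtain the variance bound via Jensen and the large-deviation bound via symmetrization plus isolation of the maximal summand, then average. The only cosmetic difference is in the Hermitian large-deviation step, where the paper applies Jensen's inequality conditionally to the signs $\{\eps_j : j\neq I\}$ (so that $\Expect_{\eps_{-I}}\big[\sum_i \eps_i\mtx{Y}_i\big] = \eps_I\mtx{Y}_I$ directly), whereas you fix those signs and use the parallelogram-type inequality $2(x^2+y^2)\ge(x+y)^2$ together with the triangle inequality; both routes yield the same bound $\Expect\normsq{\widetilde{\mtx{X}}}\ge \Expect\max_i\normsq{\mtx{Y}_i}$.
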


\noindent
The rest of the section describes the proof of Theorem~\ref{thm:lower}.

The lower bound in Theorem~\ref{thm:main} is an immediate consequence of Case (3) of Theorem~\ref{thm:lower}.
We simply introduce the notation $v(\mtx{Z})$ for the variance parameter.

\subsection{The Positive-Semidefinite Case}

The lower bound~\eqref{eqn:psd-case-lower} in the positive-semidefinite case
is relatively easy.  Recall that
$$
\mtx{W} := \sum_{i=1}^n \mtx{T}_i
\quad\text{where the $\mtx{T}_i$ are positive semidefinite.}
$$
First, by Jensen's inequality~\eqref{eqn:jensen} and the convexity of the spectral norm,
\begin{equation} \label{eqn:psd-lower-mean}
\Expect \norm{ \mtx{W} } \geq \norm{ \Expect \mtx{W} }.
\end{equation}
Second, let $I$ be the minimum value of the index $i$ where $\max_i \norm{\mtx{T}_i}$ is achieved;
note that $I$ is a random variable.  Since the summands $\mtx{T}_i$ are positive semidefinite,
it is easy to see that
$$
\mtx{T}_I \psdle \sum_{i=1}^n \mtx{T}_i.
$$
Therefore, by the norm identity~\eqref{eqn:norm-psd} for a positive-semidefinite matrix
and the monotonicity of the maximum eigenvalue, Fact~\ref{fact:weyl}, we have
$$
\max\nolimits_i \norm{\mtx{T}_i}
	= \norm{\mtx{T}_I}
	= \lambda_{\max}(\mtx{T}_I)
	\leq \lambda_{\max}\left( \sum_{i=1}^n \mtx{T}_i \right)
	= \norm{ \sum_{i=1}^n \mtx{T}_i }
	= \norm{ \mtx{W} }.
$$
Take the expectation to arrive at
\begin{equation} \label{eqn:psd-lower-max}
\Expect \max\nolimits_i \norm{ \mtx{T}_i } \leq
	\Expect \norm{ \mtx{W} }.
\end{equation}
Average the two bounds~\eqref{eqn:psd-lower-mean} and~\eqref{eqn:psd-lower-max}
to obtain
$$
\Expect \norm{\mtx{W}}
	\geq \frac{1}{2} \big[ \norm{ \Expect \mtx{W} } + \Expect \max\nolimits_i \norm{\mtx{T}_i} \big].
$$
To reach~\eqref{eqn:psd-case-lower}, apply the numerical fact that $2(a + b) \geq \big(\sqrt{a} + \sqrt{b}\big)^{2}$,
valid for all $a, b \geq 0$.

\subsection{Hermitian Case}

The Hermitian case~\eqref{eqn:herm-case-lower} is similar in spirit, but the
details are a little more involved.  Recall that
$$
\mtx{X} := \sum_{i=1}^n \mtx{Y}_i
\quad\text{where $\Expect \mtx{Y}_i = \mtx{0}$.}
$$
First, using the identity~\eqref{eqn:norm-power}, we have
\begin{equation} \label{eqn:herm-lower-var}
\left( \Expect \normsq{ \mtx{X} } \right)^{1/2}
	= \big( \Expect \norm{ \smash{\mtx{X}^2} } \big)^{1/2}
	\geq \norm{ \Expect \mtx{X}^2 }^{1/2}.
\end{equation}
The second relation is Jensen's inequality~\eqref{eqn:jensen}.

To obtain the other part of our lower bound, we use the lower bound 
from the symmetrization result, Fact~\ref{fact:symmetrization}:
$$
\Expect \normsq{ \mtx{X} }
	= \Expect \normsq{ \sum_{i=1}^n \mtx{Y}_i }
	\geq \frac{1}{4} \Expect \normsq{ \sum_{i=1}^n \eps_i \mtx{Y}_i }
$$
where $\{\eps_i\}$ is an independent family of Rademacher random variables,
independent from $\{\mtx{Y}_i\}$.  Now, we condition on the choice of $\{\mtx{Y}_i\}$,
and we compute the partial expectation with respect to the $\eps_i$.
Let $I$ be the minimum value of the index $i$ where $\max\nolimits_i \normsq{\mtx{Y}_i}$
is achieved.  By Jensen's inequality~\eqref{eqn:jensen}, applied conditionally,
$$
\Expect_{\vct{\eps}} \normsq{ \sum_{i=1}^n \eps_i \mtx{Y}_i }
	\geq \Expect_{\eps_I} \normsq{ \Expect\left[ \sum_{i=1}^n \eps_i \mtx{Y}_i \, \big\vert\, \eps_I \right] }
	= \Expect_{\eps_I} \normsq{ \eps_I \mtx{Y}_I }
	= \max\nolimits_i \normsq{\mtx{Y}_i}.
$$
Combining the last two displays and taking a square root, we discover that
\begin{equation} \label{eqn:herm-lower-max}
\left( \Expect \normsq{ \mtx{X} } \right)^{1/2}
	\geq \frac{1}{2} \big( \Expect \max\nolimits_i \normsq{\mtx{Y}_i} \big)^{1/2}.
\end{equation}
Average the two bounds~\eqref{eqn:herm-lower-var} and~\eqref{eqn:herm-lower-max}
to conclude that~\eqref{eqn:herm-case-lower} is valid.

\subsection{The Rectangular Case}

The rectangular case~\eqref{eqn:rect-case-lower} follows instantly from the Hermitian case
when we apply~\eqref{eqn:herm-case-lower} to the Hermitian dilation.  Recall that
$$
\mtx{Z} := \sum_{i=1}^n \mtx{S}_i
\quad\text{where $\Expect \mtx{S}_i = \mtx{0}$.}
$$
Define a random matrix $\mtx{X}$ by applying the Hermitian dilation~\eqref{eqn:dilation} to $\mtx{Z}$:
$$
\mtx{X} := \coll{H}(\mtx{Z}) = \sum_{i=1}^n \coll{H}(\mtx{S}_i).
$$
Since $\mtx{X}$ is a sum of independent, centered, random Hermitian matrices,
the bound~\eqref{eqn:herm-case-lower} yields
$$
\left( \Expect \normsq{ \coll{H}(\mtx{Z}) } \right)^{1/2}
	\geq \frac{1}{2} \norm{ \Expect\big[ \coll{H}(\mtx{Z})^2 \big] }
	+ \frac{1}{4} \big( \Expect \max\nolimits_i \normsq{\coll{H}(\mtx{S}_i)} \big)^{1/2}.
$$
Repeating the calculations in Section~\ref{sec:rect-case-upper},
we arrive at the advertised result~\eqref{eqn:rect-case-lower}.

\section{Optimality of Theorem~\ref{thm:main}} \label{sec:examples} 

The lower bounds and upper bounds in Theorem~\ref{thm:main} match,
except for the dimensional factor $C(\vct{d})$.  In this section, we show
by example that neither the lower bounds nor the upper bounds can
be sharpened substantially.  More precisely, the logarithms cannot appear in
the lower bound, and they must appear in the upper bound.  As
a consequence, unless we make further assumptions, Theorem~\ref{thm:main}
cannot be improved except by constant factors and, in one place, by
an iterated logarithm.

\subsection{Upper Bound: Variance Term}
\label{sec:upper-var}

First, let us show that the variance term in the upper bound
in~\eqref{eqn:main-ineqs} must contain a logarithm.
This example is drawn from~\cite[Sec.~6.1.2]{Tro15:Introduction-Matrix}.

For a large parameter $n$, consider the $d \times d$ random matrix
$$
\mtx{Z} := \sum_{i=1}^d \sum_{j=1}^n \frac{1}{\sqrt{n}} \eps_{ij} \mathbf{E}_{ii}
$$
As before, $\{ \eps_{ij} \}$ is an independent family of Rademacher random variables,
and $\mathbf{E}_{ii}$ is a $d \times d$ matrix with a one in the $(i, i)$ position
and zeroes elsewhere.  The variance parameter satisfies
$$
v(\mtx{Z}) = \norm{ \sum_{i=1}^d \sum_{j=1}^n \frac{1}{n} \mathbf{E}_{ii} }
	= \norm{ \Id_d } = 1.
$$
The large deviation parameter satisfies
$$
L^2 = \Expect \max\nolimits_{i,j} \normsq{ \frac{1}{\sqrt{n}} \eps_{ij} \mathbf{E}_{ii} }
	= \frac{1}{n}.
$$
Therefore, the variance term drives the upper bound~\eqref{eqn:main-ineqs}.
For this example, it is easy to estimate the norm directly.  Indeed,
$$
\Expect \normsq{ \mtx{Z} }
	\approx \Expect \normsq{ \sum_{i=1}^d \gamma_i \mathbf{E}_{ii} }
	= \Expect \max_{i=1,\dots, d} \abssq{\smash{\gamma_i}}
	\approx 2 \log d.
$$
Here, $\{\gamma_i\}$ is an independent family of standard normal variables,
and the first approximation follows from the central limit theorem.
The norm of a diagonal matrix is the maximum absolute value of
one of the diagonal entries.  Last, we use the well-known fact
that the expected maximum among $d$ squared standard normal variables
is asymptotic to $2\log d$.  In summary,
$$
\left( \Expect \normsq{\mtx{Z}} \right)^{1/2}
	\approx \sqrt{2 \log d \cdot v(\mtx{X})}.
$$
We conclude that the variance term in the upper bound must carry a logarithm.
Furthermore, it follows that Theorem~\ref{thm:matrix-rademacher} is numerically sharp.

\subsection{Upper Bound: Large-Deviation Term}

Next, we verify that the large-deviation term in the upper bound in~\eqref{eqn:main-ineqs}
must also contain a logarithm, although the bound is slightly suboptimal.
This example is drawn from~\cite[Sec.~6.1.2]{Tro15:Introduction-Matrix}.

For a large parameter $n$, consider the $d \times d$ random matrix
$$
\mtx{Z} := \sum_{i=1}^d \sum_{j=1}^n \big( \delta_{ij} - n^{-1} \big) \cdot \mathbf{E}_{ii}
$$
where $\{ \delta_{ij} \}$ is an independent family of $\textsc{bernoulli}\big(n^{-1}\big)$ random variables.
That is, $\delta_{ij}$ takes only the values zero and one, and its expectation is $n^{-1}$.
The variance parameter for the random matrix is
$$
v(\mtx{Z}) = \norm{ \sum_{i=1}^d \sum_{j=1}^n \Expect \big( \delta_{ij} - n^{-1} \big)^2 \cdot \mathbf{E}_{ii} }
	= \norm{\sum_{i=1}^d \sum_{j=1}^n n^{-1}\big(1 - n^{-1}\big) \cdot \mathbf{E}_{ii} }
	\approx 1. $$
The large deviation parameter is
$$
L^2 = \Expect \max\nolimits_{i,j} \normsq{ \big(\delta_{ij} - n^{-1} \big)\cdot \mathbf{E}_{ii} }
	\approx 1.
$$
Therefore, the large-deviation term drives the upper bound in~\eqref{eqn:main-ineqs}:
$$
\left( \Expect \normsq{\mtx{Z}} \right)^{1/2} \leq \sqrt{4 (1 + 2\lceil \log d \rceil)} + 4(1 + 2 \lceil \log d \rceil).
$$
On the other hand, by direct calculation
$$
\left( \Expect \normsq{\mtx{Z}} \right)^{1/2}
	\approx \left( \Expect \normsq{ \sum_{i=1}^d (Q_i - 1) \cdot \mathbf{E}_{ii} } \right)^{1/2}
	= \left( \Expect \max_{i=1,\dots,d} \abssq{Q_i - 1} \right)^{1/2}
	\approx \mathrm{const} \cdot \frac{\log d}{\log \log d}.
$$
Here, $\{Q_i\}$ is an independent family of $\textsc{poisson}(1)$ random variables,
and the first approximation follows from the Poisson limit of a binomial.  The
second approximation depends on a (messy) calculation for the expected squared
maximum of a family of independent Poisson variables.  We see that the large
deviation term in the upper bound~\eqref{eqn:main-ineqs} cannot be improved,
except by an iterated logarithm factor.

\subsection{Lower Bound: Variance Term}

Next, we argue that there are examples where the variance term in the lower bound from~\eqref{eqn:main-ineqs}
cannot have a logarithmic factor.

Consider a $d \times d$ random matrix of the form
$$
\mtx{Z} := \sum_{i,j=1}^d \eps_{ij} \mathbf{E}_{ij}.
$$
Here, $\{ \eps_{ij} \}$ is an independent family of Rademacher random variables.
The variance parameter satisfies
$$
v(\mtx{Z}) = \max\left\{ \norm{ \sum_{i,j=1}^d \big( \Expect \eps_{ij}^2 \big)\cdot \mathbf{E}_{ij} \mathbf{E}_{ij}^\adj },
	\norm{ \sum_{i,j=1}^d \big( \Expect \eps_{ij}^2 \big)\cdot \mathbf{E}_{ij}^\adj \mathbf{E}_{ij} } \right\}
	= \max\left\{ \norm{ d \cdot \Id_d }, \norm{ d \cdot \Id_d} \right\}
	= d.
$$
The large-deviation parameter is
$$
L^2 = \Expect \max\nolimits_{i,j} \normsq{ \eps_{ij} \mathbf{E}_{ij} } = 1.
$$
Therefore, the variance term controls the lower bound in~\eqref{eqn:main-ineqs}:
$$
\left( \Expect \normsq{\mtx{Z}} \right)^{1/2} \geq \sqrt{c d} + c.
$$
Meanwhile, it can be shown that the norm of the random matrix $\mtx{Z}$ satisfies
$$
\left( \Expect \normsq{\mtx{Z}} \right)^{1/2} \approx \sqrt{2d}.
$$
See the paper~\cite{BV14:Sharp-Nonasymptotic} for an elegant proof of this nontrivial result.
We see that the variance term in the lower bound in~\eqref{eqn:main-ineqs}
cannot have a logarithmic factor.

\subsection{Lower Bound: Large-Deviation Term}

Finally, we produce an example where the large-deviation term in the lower bound from~\eqref{eqn:main-ineqs}
cannot have a logarithmic factor.

Consider a $d \times d$ random matrix of the form
$$
\mtx{Z} := \sum_{i=1}^d P_i \mathbf{E}_{ii}.
$$
Here, $\{P_i\}$ is an independent family of symmetric random variables
whose tails satisfy
$$
\Prob{ \abs{P_i} \geq t } = \begin{cases} t^{-4}, & t \geq 1 \\ 1, & t \leq 1. \end{cases}
$$
The key properties of these variables are that
$$
\Expect P_i^2 = 2
\quad\text{and}\quad
\Expect \max_{i=1,\dots,d} P_i^2 \approx {\rm const} \cdot d^2.
$$
The second expression just describes the asymptotic order of the expected maximum.
We quickly compute that the variance term satisfies
$$
v(\mtx{Z}) = \norm{ \sum_{i=1}^d \big( \Expect P_i^2 \big) \mathbf{E}_{ii} }
	= 2.
$$
Meanwhile, the large-deviation factor satisfies
$$
L^2 = \Expect \max_{i=1,\dots, d} \normsq{ P_i \mathbf{E}_{ii} }
	= \Expect \max_{i=1, \dots, d} \abssq{P_i}
	\approx \mathrm{const} \cdot d^2.
$$
Therefore, the large-deviation term drives the lower bound~\eqref{eqn:main-ineqs}:
$$
\left( \Expect \normsq{\mtx{Z}} \right)^{1/2}
	\gtrapprox \mathrm{const} \cdot d.
$$
On the other hand, by direct calculation,
$$
\left( \Expect \normsq{\mtx{Z}} \right)^{1/2}
	= \left( \Expect \normsq{ \sum_{i=1}^d P_i \mathbf{E}_{ii} } \right)^{1/2}
	= \left( \Expect \max_{i=1,\dots,d} \abssq{ P_i } \right)^{1/2}
	\approx \mathrm{const} \cdot d.
$$
We conclude that the large-deviation term in the lower bound~\eqref{eqn:main-ineqs}
cannot carry a logarithmic factor.

\section*{Acknowledgments}

The author wishes to thank Ryan Lee for a careful reading of the manuscript.
The author gratefully acknowledges support from
ONR award N00014-11-1002 and the Gordon \& Betty Moore Foundation.

\bibliographystyle{myalpha}
\newcommand{\etalchar}[1]{$^{#1}$}

\end{document}